\documentclass[letterpaper,leqno]{amsart}
\usepackage{latexsym,amssymb,amsmath,amsthm,color}
\usepackage[english]{babel}
\usepackage{amsfonts}

\numberwithin{equation}{section}
\newcommand\delc[1]{}

\newcommand{\eps}{\varepsilon}
\newcommand{\rH}{\mathrm{ H}}
\newcommand{\lb}{\langle}
\newcommand{\rb}{\rangle}

\newtheorem{theorem}{Theorem}[section]
\newtheorem{lemma}[theorem]{Lemma}
\newtheorem{proposition}[theorem]{Proposition}

\newtheorem{definition}[theorem]{Definition}
\newtheorem{remark}[theorem]{Remark}

\newcommand{\sol}{\mathrm{sol}}

\begin{document}
\title[Stochastic NSEs with irregular noise]{A note on stochastic Navier-Stokes equations with not regular
multiplicative noise}
\author[Z. Brze\'zniak and B. Ferrario]{Zdzis{\l}aw Brze\'zniak \\Department of Mathematics\\
                         The University of York\\
                         Heslington, York YO10 5DD, UK
       \and  Benedetta Ferrario \\Dip. di Matematica ``F. Casorati''\\
                           Universit\`a di Pavia\\
                           I-27100 Pavia}
\dedicatory{Dedicated to Professor Boles{\l}aw Szafirski
on his 80th birthday.}
\date{\today}

\begin{abstract}
We consider the Navier-Stokes equations in $\mathbb R^d$ ($d=2,3$) with 
 a stochastic forcing term which is white noise in time and coloured in space; 
 the spatial covariance of the noise is not too regular, so It\^o calculus cannot 
 be applied in the space of finite energy vector fields. 
We prove existence of weak solutions for $d=2,3$ and pathwise uniqueness for $d=2$.
\end{abstract}

\subjclass[2010]{76M35, 76D05, 60H15}

\keywords{martingale solutions, $\gamma$-radonifying operators, pathwise uniqueness}

\maketitle

\section{Introduction}
The aim of this paper is to study the stochastic Navier-Stokes equations
with multiplicative noise, that is the equations of motion of a viscous
incompressible fluid with two forcing terms, one is deterministic and the
other one is random. The equations are
\begin{equation}\label{sist:ini}
\begin{cases}
\partial_t v+[-\nu \Delta v +(v \cdot \nabla)v +\nabla p]\ dt = G(v)dw+f\ dt
\\
\nabla \cdot v=0
\end{cases}
\end{equation}
where the unknowns are the velocity $v=v(t,x)$ and  the pressure $p=p(t,x)$.
By $\nu>0$ we denote  the viscosity coefficient  and in our model the stochastic force can depend on the velocity itself.
We consider $x \in \mathbb R^d$ for $d=2,3$ and $t\ge 0$.
The above equations are  associated with an initial condition
\begin{equation}\label{sist:ini0}
v(0,x)=v_0(x)
\end{equation}
where $v_0$ is a divergence free square integrable vector field
on $\mathbb R^d$.

The problem of existence and uniqueness of solutions  on
any time interval has been already studied in the case of a
unbounded domain in \cite{BM2013}, \cite{Brz+Li_2006}
  and in the case of full euclidean space $\mathbb{R}^d$
in \cite{vf}, \cite{MR2004}, \cite{MR2005}, always
with smooth assumptions on the noise term. The problem in bounded
domains has been considered in many papers,
with different assumptions on the noise. From the technical point of view,
it is easier to work in a bounded domain, since the issue of compactness
needed in the proof of existence is more involved in unbounded spatial domains.

Inspired by \cite{FG} (Section 3.4),
we consider the Navier-Stokes equations \eqref{sist:ini}
with a multiplicative noise
whose covariance is not regular enough to allow to use It\^o formula
 in the space  of finite energy velocity vectors, which is the
basic space in which one looks for existence of solutions.
In this way the noise is rougher than in \cite{BM2013},
\cite{vf}, \cite{MR2004}, \cite{MR2005}.
Our original aim was to investigate the existence of  invariant measures
 and stationary solutions for these stochastic Navier-Stokes equations,
following on one side the work of \cite{FG} in bounded domains
and on the other side the method by the first
named authour with Motyl and Ondrej\'at \cite{BM0_2015} for unbounded domains
but with  more regular noise, which is based on \cite{MS1999}.
While working on this problem we realised that the existence or uniqueness
of solutions was left open in some cases;
indeed, \cite{FG} proved existence of martingale and stationary
martingale solutions with rough multiplicative noise only for $d=2$.

The aim of this note is to prove existence of a martingale solution for
the Navier-Stokes equations \eqref{sist:ini} in $\mathbb R^d$ for $d=2$
as well as $d=3$,
when the covariance of the noise is not a
trace class operator in the basic space $H$ of finite energy.
When $d=2$ we prove pathwise uniqueness;
this implies existence of a strong solution too.
We point out that working in Banach spaces
(and using $\gamma$-radonifying operators instead of Hilbert-Schimdt operators)
we can make  assumptions on the covariance of the noise, which are
weaker that in \cite{FG} to get existence of  martingale solutions.
However,
the argument to prove existence of martingale solutions comes from
Section 3.4 of \cite{FG}.
The original problem on invariant measures in unbounded domains
will be studied in a subsequent paper.

As far as the content of the paper are concerned, in Section \ref{sec-mf}
we define the abstract setting in order to write \eqref{sist:ini} as an It\^o
equation in some Banach space. Then Section \ref{sec-e} deals with the
existence result, whereas uniqueness when $d=2$ is proved
in Section \ref{sec-u}.
Definition and properties of $\gamma$-radonifying operators and
some compactness lemmas are given in the Appendix.

\section{Mathematical framework}\label{sec-mf}
We first introduce the functional spaces.

For $1 \le p<\infty$ let $L^p=[L^p(\mathbb R^d)]^d$ with norm
\[
\|v\|_{L^p}=\left(\sum_{k=1}^d \|v^k\|_{L^p(\mathbb R^d)}^p\right)^{\frac1p}
\]
where $v=(v^1,\ldots,v^d)$.\\
Set $J^s=(I-\Delta)^{\frac s2}$.
We define the generalized Sobolev spaces of divergence free vector
distributions as
\begin{eqnarray}
H^{s,p}&=&\{u \in[{\mathcal S}^\prime(\mathbb R^d)]^d:
\|J^s u\|_{L^p}<\infty\},
\\
H^{s,p}_\sol&=&\{u \in H^{s,p} : \nabla \cdot u =0 \}
\end{eqnarray}
for $s \in \mathbb R$ and $1\le p \le \infty$. The divergence
has to be understood in the weak sense.
We have, see \cite{BL},  that
$J^\sigma$ is an isomorphism between $H^{s,p}$ and $H^{s-\sigma,p}$.
Moreover $H^{s_2,p} \subset H^{s_1,p}$ when $s_1<s_2$ and
the dual space of $H^{s,p}$ is the space $H^{-s}_q$ with
$1< q \le \infty$: $\frac 1p+\frac 1q=1$. we
denote by $\langle\cdot, \cdot \rangle$ the $H^{s,p}-H^{-s,q}$ duality bracket:
\[
\langle u,v\rangle =\sum_{k=1}^d \int_{\mathbb R^d} (J^s u^k)(x) \
 (J^{-s}v^k)(x) dx .
\]

In particular, for the Hilbert case $p=2$
we set $\rH=H^{0,2}_\sol$ and, for $s\neq 0$, $\rH^s=H^{s,2}_\sol$;
that is
\[
 \rH=\{v \in[L^2(\mathbb R^d)]^d: \nabla \cdot v =0\}
\]
with scalar product inherited from $[L^2(\mathbb R^d)]^d$.

We recall the Sobolev embedding theorem, see, e.g., \cite[Th. 6.5.1]{BL}.
If  $1< q< p<\infty$ with
\[ \frac 1p=\frac 1q-\frac{r-s}d\]
then  the following inclusion holds
\[
H^{r,q}\subset H^{s,p}
\]
and there exists a constant $C$ (depending  on $r-s,p,q,d$) such that
\[
\|v\|_{H^{s,p}}\le C \|v\|_{H^{r,q}} \;\;\mbox{ for all }
v \in [{\mathcal S}^\prime(\mathbb R^d)]^d.
\]

By Lemma 2.5 of \cite{hw}, see also  Lemma C.1 in \cite{BM2013},
there exists a separable Hilbert space $U$ such that $U$ is a dense subset
of $H^1$ and is compactly embedded in $H^1$.
We also have that
\[
U \subset H^1 \subset H \simeq H^\prime\subset H^{-1}\subset U^\prime
\]
with dense and continuous embeddings, but in addition $H^{-1}$
is compactly embeddded in $U^\prime$.

Now we define the operators appearing in the abstract formulation of
\eqref{sist:ini}. We refer to \cite{kp86} and \cite{T} for the details.

Let $A=-\Pi \Delta$, where $\Pi$ is the projector onto the space of divergence free vector fields. Then
$A$ is a
linear unbounded  operator in $H^{s,p}$ as well as in
$H^{s,p}_\sol$
($s \in \mathbb R$, $1\le p<\infty$), which generates
a contractive and analytic $C_0$-semigroup $\{e^{-tA}\}_{t\ge 0}$.
Moreover, for $t>0$ the operator $e^{-tA}$ is bounded
from $H^{s,p}_\sol$ into $H^{s^\prime,p}_\sol$
with $s^\prime>s$ and there exists a constant $M$ (depending on $s^\prime-s$ and $p$)
such that, see  Lemma 1.2 in  the
Kato-Ponce paper \cite{kp86},
\begin{equation}
\|e^{-tA} v \|_{H^{s^\prime,p}_\sol}\le M (1+t^{-(s^\prime-s)/2}) \|v\|_{H^{s,p}_\sol}
\end{equation}
i.e.
\begin{equation}\label{semigruppo}
\|e^{-tA}  \|_{\mathcal L(H^{s,p}_\sol;H^{s^\prime,p}_\sol)}\le M (1+t^{-(s^\prime-s)/2}) .
\end{equation}

We set
\[
\|\nabla v \|_{L^2}^2=\sum_{k=1}^d \|\nabla v^k\|^2_{L^2}, \;\;\; v\in H^1.
\]
We have $A: H^1 \to  H^{-1}$ and
\[
\langle Av,v\rangle=\|\nabla v\|_{L^2}^2, \;\;\; v\in H^1.
\]
Moreover
\begin{equation}\label{quadrati}
\|v\|_{H^1}^2=\|v\|^2_{L^2}+\|\nabla v\|_{L^2}^2
\end{equation}

We define the bilinear operator $B:H^1\times H^1\to H^{-1}$ as
\[
\langle B(u,v),z\rangle=\int_{\mathbb R^d} (u(x)\cdot \nabla )v(x) \ \cdot z(x) \ dx.
\]
The form $B$  is bounded, since by H\"older and Sobolev inequalities
\begin{equation}\label{stima4-4}
|\langle B(u,v),z\rangle|   \le \|u\|_{L^4} \|\nabla v \|_{L^2} \|v\|_{L^4}\
 \le C \|u\|_{H^1}\|v\|_{H^1} \|z\|_{H^1}.
\end{equation}
Moreover, see e.g. \cite{T},
\begin{equation}\label{scambio}
\langle B(u,v),z\rangle =-\langle B(u,z),v\rangle , \qquad
\langle B(u,v),v\rangle =0 .
\end{equation}
Using \eqref{stima4-4}-\eqref{scambio}
and the fact that $H^1$ is dense in $L^4$, $B$ can be extended to a bounded bilinear operator from
$L^4\times L^4$ to $H^{-1}$ and
\begin{equation}\label{bL4}
\|B(u,v)\|_{H^{-1}}\le \|u\|_{L^4}\|v\|_{L^4}.
\end{equation}
We shall need an estimate of $B(u,v)$ in bigger spaces.

\begin{lemma}\label{stimaB-1-gamma}
Let  $d=2$ and $g \in (0,1)$. Then there exists  a constant $C_g$  such that for all $\;\;\; u \in H^{1-g}$, $v\in H^{\frac {1-g}2}$,
\begin{equation}\label{ineq-B_g}
\|B(u,v)\|_{H^{-1-g}}
+  \|B(v,u)\|_{H^{-1-g}}\le
C_g\|u\|^{\frac {1-g}2}_{H^{-g}} \|u\|^{\frac {1+g}2}_{H^{1-g}}   \|v\|_{H^{\frac {1-g}2}}.
\end{equation}
\end{lemma}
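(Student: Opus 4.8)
The plan is to estimate $B(u,v)$ and $B(v,u)$ in $H^{-1-g}$ by duality, testing against vector fields $z$ in $H^{1+g}$, using the antisymmetry \eqref{scambio} to transfer the gradient onto $z$, and then closing the estimate with H\"older's inequality, the Sobolev embedding theorem quoted above, and a complex interpolation inequality on the scale $\{H^{s,2}\}$. Since $C^\infty_c$ divergence free vector fields are dense in $H^{1-g}$ and in $H^{\frac{1-g}2}$, I would first prove \eqref{ineq-B_g} for such smooth $u,v$ — where every step below is justified by elementary integration by parts — and then pass to the limit, exactly as $B$ was extended from $H^1\times H^1$ to $L^4\times L^4$ in order to obtain \eqref{bL4}.

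For the main estimate I would fix $z\in H^{1+g}$ with $\|z\|_{H^{1+g}}\le1$ and use the first identity in \eqref{scambio} to write
\[
\langle B(u,v),z\rangle=-\int_{\mathbb R^2}(u(x)\cdot\nabla)z(x)\cdot v(x)\,dx,\qquad
\langle B(v,u),z\rangle=-\int_{\mathbb R^2}(v(x)\cdot\nabla)z(x)\cdot u(x)\,dx,
\]
so that the two terms to be bounded are symmetric in $u$ and $v$. Since $d=2$ and $g\in(0,1)$, the exponents $\frac{4}{1+g}$, $\frac{2}{1-g}$, $\frac{4}{1+g}$ are H\"older conjugate, their reciprocals adding up to $\frac{1+g}{4}+\frac{1-g}{2}+\frac{1+g}{4}=1$, hence
\[
\Big|\int_{\mathbb R^2}(u\cdot\nabla)z\cdot v\,dx\Big|\le\|u\|_{L^{\frac{4}{1+g}}}\,\|\nabla z\|_{L^{\frac{2}{1-g}}}\,\|v\|_{L^{\frac{4}{1+g}}}
\]
and likewise with $u$ and $v$ interchanged. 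Next, by the Sobolev embedding theorem in dimension two, $H^{\frac{1-g}2,2}\subset L^{\frac{4}{1+g}}$ because $\frac{1+g}{4}=\frac12-\frac{(1-g)/2}{2}$, and $H^{g,2}\subset L^{\frac{2}{1-g}}$ because $\frac{1-g}{2}=\frac12-\frac g2$; as $\nabla$ maps $H^{1+g}$ boundedly into $[H^{g,2}(\mathbb R^2)]^{2\times2}$, this gives $\|\nabla z\|_{L^{\frac{2}{1-g}}}\le C\|z\|_{H^{1+g}}\le C$, so that
\[
\|B(u,v)\|_{H^{-1-g}}+\|B(v,u)\|_{H^{-1-g}}\le C\,\|u\|_{H^{\frac{1-g}2}}\,\|v\|_{H^{\frac{1-g}2}}.
\]

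To conclude I would interpolate the factor carrying $u$. Since $\frac{1-g}2=(1-\theta)(-g)+\theta(1-g)$ with $\theta=\frac{1+g}2\in(0,1)$, complex interpolation between $H^{-g}$ and $H^{1-g}$ (see \cite{BL}) yields
\[
\|u\|_{H^{\frac{1-g}2}}\le C\,\|u\|_{H^{-g}}^{\frac{1-g}2}\,\|u\|_{H^{1-g}}^{\frac{1+g}2},
\]
and inserting this into the previous display (retaining the factor $\|v\|_{H^{\frac{1-g}2}}$, and using $H^{1-g}\subset H^{\frac{1-g}2}$ so that the right-hand side makes sense for $u\in H^{1-g}$) gives \eqref{ineq-B_g}.

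I expect the main obstacle to be the bookkeeping in the very first step rather than any single hard inequality: one has to make sense of the pairing $\langle B(u,v),z\rangle$ and of \eqref{scambio} for $u,v$ only in the fractional spaces $H^{1-g}$, $H^{\frac{1-g}2}$. This should be handled precisely as the extension of $B$ to $L^4\times L^4$: for smooth compactly supported divergence free $u,v$ the computation above produces in particular the cruder a priori bound $\|B(u,v)\|_{H^{-1-g}}\le C\|u\|_{H^{1-g}}\|v\|_{H^{\frac{1-g}2}}$ (estimating $\|u\|_{H^{-g}}$ by $C\|u\|_{H^{1-g}}$), which lets one extend $B$ by continuity to a bounded bilinear operator $H^{1-g}\times H^{\frac{1-g}2}\to H^{-1-g}$, and similarly with the arguments in the order $(v,u)$; the sharper estimate \eqref{ineq-B_g} is then stable under this limiting procedure. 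Beyond tracking Sobolev exponents I do not anticipate further difficulty.
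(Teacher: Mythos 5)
Your proposal is correct and follows essentially the same route as the paper's proof: duality against test functions in $H^{1+g}$, integration by parts via \eqref{scambio} to put the derivative on the test function, H\"older's inequality with exponents $\tfrac{4}{1+g},\tfrac{4}{1+g},\tfrac{2}{1-g}$ combined with the two-dimensional Sobolev embeddings $H^{\frac{1-g}2}\subset L^{\frac{4}{1+g}}$ and $H^{g}\subset L^{\frac{2}{1-g}}$, and finally the complex interpolation $H^{\frac{1-g}2}=\left[H^{-g},H^{1-g}\right]_{\frac{1+g}2}$ applied to the factor carrying $u$. The only difference is that you make the density/extension step explicit, which the paper dispatches in one sentence.
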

\begin{proof} Since the space $H^{-1-g}$ is dual to $H^{1+g}$ we have

\[
\|B(u,v)\|_{H^{-1-g}}=\sup_{\|\phi\|_{H^{1+g}}\le 1} | \langle B(u,v),\phi\rangle |.
\]
For smooth vectors of compact support, we have
\begin{equation}\label{stim-tril-g}\begin{split}
\langle B(u,v),\phi\rangle&
 =\int_{\mathbb R^2} ([u(x)\cdot \nabla] v(x)) \cdot \phi(x) \ dx
\\
&=\sum_{i,j=1}^2 \int_{\mathbb R^2} \partial_i (u^i(x) v^j(x))\phi^j(x) \ dx\\
&
=-\sum_{i,j=1}^2 \int_{\mathbb R^2} u^i(x) v^j(x) \partial_i \phi^j(x) \ dx
\end{split}\end{equation}
and similarly
\begin{equation}\label{stim-tril-g2}
\langle B(v,u),\phi\rangle
=-\sum_{i,j=1}^2 \int_{\mathbb R^2} v^i(x) u^j(x) \partial_i \phi^j(x) \ dx .
\end{equation}
This holds also for less regular vectors, by density.
For scalar functions, H\"older inequality and Sobolev embedding (in two dimensions) give
\begin{equation}\label{stim-scalari}
\begin{split}
\left|\int_{\mathbb R^2} fgh \ dx \right|
&\le \|f\|_{L_{p_1}} \|g\|_{L_{p_2}} \|h\|_{L_{p_3}}
 \\
&\le C\|f\|_{H^{\frac{1-g}2}} \|g\|_{H^{\frac{1-g}2}}\|h\|_{H^{g}}
\end{split}
\end{equation}
choosing $p_1=p_2=\frac 4{1+g}$ and $p_3=\frac 2{1-g}$ so that
$\frac 1{p_1}+\frac 1{p_2}+\frac 1{p_3} = 1$.

Hence, from \eqref{stim-tril-g}-\eqref{stim-scalari} we get
\[
\|B(u,v)\|_{H^{-1-g}} + \|B(v,u)\|_{H^{-1-g}}
\le
C \|u\|_{H^{\frac{1-g}2}} \|v\|_{H^{\frac{1-g}2}}.
\]
Moreover, by the complex interpolation
$H^{\frac{1-g}2}=\left[ H^{-g},H^{1-g}\right]_{\frac {1+g}2}$
we have
\[
 \|u\|_{H^{\frac{1-g}2}}\le C \|u\|_{H^{-g}}^{\frac{1-g}2} \|u\|_{H^{1-g}}^{\frac{1+g}2}.
\]
This allows to conclude an estimate for  $\|B(u,v)\|_{H^{-1-g}}$; in the same
way we deal with $\|B(v,u)\|_{H^{-1-g}}$.
\end{proof}
\medskip

We notice that in the proof as well in the sequel, we denote by $C$
different generic constants; if we need to specify it,
we label it in a peculiar way.

Finally, we define the noise forcing term.
Given a real separable Hilbert space $Y$
we consider a $Y$-cylindrical Wiener process $w$ defined on a
stochastic basis $(\Omega,\mathbb F,\mathbb P)$
where $\mathbb F=\{\mathbb F_t\}_{t\ge 0}$ is a right continuous filtration, see e.g.
\cite{dpz}. This means that
\[
w(t)=\sum_{j=1}^\infty \beta_j(t)e_j, \;\; t\ge 0,
\]
where $\{e_j\}_{j\in \mathbb N}$ is a complete orthonormal system in $Y$ and
$\{\beta_j\}_{j \in \mathbb N}$
is a sequence of indipendent identically distributed standard Wiener processes defined
on   $(\Omega,\mathbb F,\mathbb P)$.

In the Appendix we recall the definition and basic properties of
$\gamma$-radonifying operators.
For the covariance of the noise we make the following assumptions:
\begin{enumerate}
\item[{\bf (G1)}]
$\exists \  g \in (0,1)$  such that the mapping
$G:H\to \gamma(Y;H^{- g })$
          is well defined, continuous and
\[
\sup_{v \in H} \|G(v)\|_{\gamma(Y;H^{-g})}=:K_{g,2}<\infty
\]
\item[{\bf (G2)}]
$\exists\  g \in (0,1)$  such that the mapping  $G:H\to \gamma(Y;H^{- g ,4}_\sol)$
            is  well defined, continuous and
\[
\sup_{v \in H} \|G(v)\|_{\gamma(Y;H^{-g,4}_\sol)}=:K_{g,4}<\infty
\]
\item[{\bf (G3)}] If assumption {\bf (G1)} holds, then $G$ extends to a Lipschitz continuous map $G:H^{-g}\to \gamma(Y;H^{-g})$, i.e.
\[
\exists\ L_g>0: \; \|G(v_1)-G(v_2)\|_{\gamma(Y;H^{-g})}
    \le L_g\|v_1-v_2\|_{H^{-g}} \qquad\forall v_1, v_2 \in H^{-g}.
\]
\end{enumerate}

\begin{remark}\label{oss-su-G}
i)
A map  $G:H\to \gamma(Y;H^{-g,p}_\sol)$ is well defined iff the map
$J^{-g}G:H\to \gamma(Y;H^{0,p}_\sol)$ is well defined.
Moreover
\[
\|J^{-g}G(v)\|_{\gamma(Y;H^{0,p}_\sol)}=
\|G(v)\|_{\gamma(Y;H^{-g,p}_\sol)}
\le K_{g,p}, \;\; v \in H.
\]
For $g\le 0$, assumption {\bf (G1)} would mean that $G(v):Y\to H$
is a Hilbert-Schmidt operator, but we make weaker assumptions on $G$,
that is we consider $g>0$.
\\
Notice that if any of the three  conditions above
 holds for some $g$, then it holds for any $g'>g$ too.
\\ ii)
In the particular case of $G(v)e_j=\sigma_j(v) e_j$ (with $\sigma_j:
H \to \mathbb R$ and $Y=H$) we have
\[\begin{split}
\|G(v)\|_{\gamma(Y;H^{- g,p}_\sol)}^p
&=
 \|J^{-g} G(v)\|_{\gamma(Y;H^{0,p}_\sol)}^p\\
 &=
\int_{\mathbb R^d} \left(\sum_{j=1}^\infty \sigma_j(v(x))^2
|(J^{-g} e_j)(x)|^2\right)^{\frac p2} dx\\
&\le
\int_{\mathbb R^d} \left(\sum_j \Vert \sigma_j\Vert_{\infty}^2
|(J^{-g} e_j)(x)|^2\right)^{\frac p2} dx,
\end{split}\]
where $\displaystyle\Vert \sigma_j\Vert_{\infty}^2:=\sup_{v \in H} \sigma_j(v)^2$.

Taking $p=2$ we infer that  the condition {\bf (G1)} holds if
\[
\sum_{j=1}^\infty \Vert \sigma_j\Vert_{\infty}^2 \|e_j\|_{H^{-g}}^2<\infty.
\]
However,  condition {\bf (G1)}  is more involved.
\\ iii)
According to Proposition \ref{conv-stoc} we have
\begin{equation}\label{E-conv-stoc}
\mathbb E \left\|\int_0^t G(v(s))\,dw(s)\right\|^m_{H^{-g}}
\le C_m (K_{g,2})^m t^{m/2}.
\end{equation}
\end{remark}

Projecting the first equation of \eqref{sist:ini} onto the space of divergence free vector fields , we get rid of the pressure term
and we can write the stochastic  Navier-Stokes equations \eqref{sist:ini}
in abstract form as
\begin{equation}\label{sns}
\begin{cases}dv(t)+[Av(t)+B\left(v(t),v(t)\right)]dt=G(v(t))\,dw(t)+f(t)\ dt,& t \in (0,T] \\
v(0)=v_0
\end{cases}
\end{equation}
We assume from now on that
$v_0 \in H$ and $f \in L^p(0,T;H^{-1})$ for some $p>2$, see Assumption \textbf{A.2} and Theorem 5.1  in \cite{BM2013}.
Now, we denote by $C([0,T];H_{\mathrm{w}})$ the space of $H$-valued  weakly continuous
functions with the topology of uniform weak convergence on $[0,T]$;
in particular $v_n \to v$ in $C([0,T];H_{\mathrm{w}})$ means
\[
\lim_{n\to \infty} \sup_{0\le t\le T}|(v_n(t)-v(t),h)_H|=0
\]
for all $h \in H$.
Notice that $v(t) \in H$ for any $t$ if $v \in C([0,T];H_{\mathrm{w}})$.

Our aim is to find a martingale solution to \eqref{sns}.
By this we mean a weak solution in the probabilistic sense, according
to the following
\begin{definition}
[solution to the martingale problem]
We say that there exists a martingale solution of \eqref{sns} if there exist
\begin{enumerate}
\item[$\bullet$]
a stochastic basis $(\hat\Omega,\hat{\mathbb F},\hat{\mathbb P})$
\item[$\bullet$]
a $Y$-cylindrical Wiener process $\hat w$
\item[$\bullet$]
a progressively measurable process $v:[0,T]\times \hat \Omega\to H$ with
$\hat{\mathbb P}$-a.e. path
\[
v \in C([0,T];H_{\mathrm{w}})\cap L^2(0,T;L^4)
\]
and for any $t \in [0,T], \psi \in H^2$
\begin{multline}\label{sol-path}
 \langle v(t),\psi\rangle
 +\int_0^t \langle Av(s) ,\psi\rangle ds
 +\int_0^t \langle B(v(s),v(s)),\psi \rangle ds
\\=
 \langle v_0,\psi\rangle
  +\int_0^t \langle f(s) ,\psi\rangle ds
  +\langle \int_0^t G(v(s))\,d\hat w(s),\psi\rangle
\end{multline}
$\hat{\mathbb P}$-a.s.
\end{enumerate}
\end{definition}
The regularity of the paths of this solution makes all the terms in \eqref{sol-path} well defined, thanks to
\eqref{bL4} and \eqref{E-conv-stoc}.

\section{Existence of solutions}\label{sec-e}
Looking for martingale solutions for system \eqref{sns} one cannot use It\^o
calculus in the space $H$, since the covariance of the noise is not regular
 enough.
Therefore, we introduce an approximating system by regularizing the covariance
of the noise; this gives a sequence of approximating processes $\{v_n\}_{n}$.
In order to pass to the limit as $n\to\infty$
we need the tightness of the sequence of their laws.
This is obtained by working with two auxiliary processes $z_n$ and $u_n$
with $v_n=z_n+u_n$ in a similar way to \cite{FG}.

Therefore, we first introduce a smoother problem which approximates
\eqref{sns};
then we prove the tightness of the sequence of the laws; finally
we show the convergence, providing existence of a martingale solution
to \eqref{sns}.

\subsection{The approximating equation}
We start by defining the sequences
\[
R_n=n(nI+A)^{-1} \qquad \qquad   G_n=R_n G, \qquad n=1,2,\ldots
\]
We have, see \cite{P}, Sect. 1.3, that each $R_n$ is a contraction operator in $H^{s,p}_\sol$  and
it converges strongly to the identity operator, i.e.
\[
\|R_n\|_{\mathcal L(H^{s,p}_\sol;H^{s,p}_\sol)} \le 1
\qquad \text{and} \qquad\lim_{n \to \infty} R_n h=h
\qquad \forall h \in H^{s,p}_\sol.
\]
Moreover, it is a linear bounded operator from $H^{s,p}_\sol$ to $H^{s+t,p}_\sol$ for any $t \le 2$; but the operator norm is not uniformly bounded in $n$ for $t>0$. We point out the case for $p=2$, needed in the sequel; we have
\[
\|R_n\|_{\mathcal L(H^s_\sol;H^{s+t}_\sol)} =\sup_{\|u\|_{H^s_\sol}\le 1} \|R_n u\|_{H^{s+t}_\sol}
\]
and denoting by $\hat u=\hat u(\xi)$ the Fourier transform of $u$
\begin{equation}\label{stime-non-unif}
\|R_n u\|_{H^{s+t}_\sol}=\|J^t R_n J^s u\|_{L^2}=\|n\frac{(1+|\xi|^2)^{\frac t2}}{n+|\xi|^2}(1+|\xi|^2)^{\frac s2}\hat u(\xi)\|_{L^2}
\le B_n \|u\|_{H^s_\sol}
\end{equation}
with $B_n:=\|n\frac{(1+|\xi|^2)^{\frac t2}}{n+|\xi|^2}\|_{L_\infty}
=C_t \frac n{(n-1)^{1-\frac t2}}$, given $t \in (0,2)$. For large $n$,
the quantity  $B_n$ behaves like  $n^{\frac t2}$, which is not bounded.

From the above and Baxendale \cite{Bax-76}
\begin{equation}\label{stimaGn}
\|G_n(v)\|_{\gamma(Y;H^{-g,p}_\sol)}\le \|G(v)\|_{\gamma(Y;H^{-g,p}_\sol)}\qquad \forall n
\end{equation}
and
\begin{equation}\label{lim-Gn}
\lim_{n \to \infty} \|G_n(v)-G(v)\|_{\gamma(Y;H^{-g,p}_\sol)}=0.
\end{equation}
On the other hand, the operator $G_n(v)$ is more regular than $G(v)$. Indeed,
assuming {\bf (G1)},   $G_n(v)$ is a Hilbert-Schmidt operator in $H$, i.e.
\begin{equation}\label{laGn}\begin{split}
\|G_n(v)\|_{\gamma(Y;H)}&\le
\|R_nJ^g\|_{\mathcal L(H;H)}  \|J^{-g} G(v)\|_{\gamma(Y;H)}\\
&
\le \|R_n\|_{\mathcal L(H;H^g)}  \|G(v)\|_{\gamma(Y;H^{-g})} .
\end{split}\end{equation}

For any $n \in \mathbb N$, we consider the following auxiliary problem
\begin{equation}\label{snsn}
\begin{cases}
dv(t)+[Av(t)+B(v(t),v(t))]dt=G_n(v(t))\,dw(t)+f(t) dt,  \ t \in (0,T]\\
 v(0)=v_0
\end{cases}\end{equation}
This is the Navier-Stokes equation \eqref{sns} with a more regular noise.
Thanks to \eqref{laGn}, the operator $G_n$ is regular enough and one can prove that
there exists a martingale solution  for each $n$. The result is obtained by means
of It\^o formula for $d\|v_n(t)\|_H^2$, as in Theorem 5.1
of \cite{BM2013} or Theorem 2.1 of \cite{MR2005}.
More precisely
\begin{proposition}\label{esist-vn}
Let $v_0 \in H$ and $f \in L^p(0,T;H^{-1})$ for some $p>2$. If the assumption {\bf (G1)} is satisfied,
then for each $n$  there exists a martingale solution
$\left((\Omega_n,{\mathbb F}_n, {\mathbb P}_n), w_n,v_n\right)$ of \eqref{snsn};
in addition, there exists a constant $C_n$, depending also on $T$, $\|f\|_{L^p(0,T;H^{-1})}$,
$\sup_{v\in H} \|G_n(v)\|_{\gamma(Y;H)}$, such that
\begin{equation}\label{teoGn}
\mathbb E_n \left[
\sup_{0\le t \le T} \|v_n(t)\|_{H}^2+\int_0^T \|\nabla v_n(t)\|^2_{L^2}dt
\right]\le C_n.
\end{equation}
Moreover, if
 $d=2$ then   $v_n \in C([0,T];H)$ $\mathbb P_n$-a.s.
\end{proposition}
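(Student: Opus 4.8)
The plan is to reduce \eqref{snsn} to the framework already developed in \cite{BM2013} (Theorem 5.1) and \cite{MR2005} (Theorem 2.1). The essential point is \eqref{laGn}: under assumption \textbf{(G1)}, for each fixed $n$ the regularized coefficient satisfies $\sup_{v\in H}\|G_n(v)\|_{\gamma(Y;H)}\le\|R_n\|_{\mathcal L(H;H^g)}\,K_{g,2}<\infty$ and $G_n:H\to\gamma(Y;H)$ is continuous, so the noise in \eqref{snsn} is Hilbert--Schmidt valued in $H$ with a bounded (in particular sublinear) coefficient, and It\^o calculus in $H$ applies. I would therefore set up a Galerkin approximation $v_n^{(k)}$, projecting \eqref{snsn} --- nonlinearity and noise $G_n$ included --- onto finite-dimensional subspaces of $H$, exactly as in \cite{BM2013}; note that $f\in L^2(0,T;H^{-1})$ since $p>2$.

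For the a priori bounds I would apply It\^o's formula to $\|v_n^{(k)}(t)\|_H^2$. The trilinear term drops out because $\langle B(v,v),v\rangle=0$ by \eqref{scambio}; then, using $\langle Av,v\rangle=\|\nabla v\|_{L^2}^2$ together with Young's inequality to absorb $\langle f,v_n^{(k)}\rangle$ (recall \eqref{quadrati}), bounding the It\^o correction by $\int_0^t\|G_n(v_n^{(k)})\|_{\gamma(Y;H)}^2\,ds\le T\big(\sup_{v\in H}\|G_n(v)\|_{\gamma(Y;H)}\big)^2$, applying the Burkholder--Davis--Gundy inequality to the stochastic term, and invoking Gronwall's lemma, one obtains \eqref{teoGn} (and higher moments, if needed) with a constant $C_n$ depending on $T$, $\|f\|_{L^p(0,T;H^{-1})}$ and $\sup_{v\in H}\|G_n(v)\|_{\gamma(Y;H)}$; the dependence of $C_n$ on $n$ is unavoidable since $\|R_n\|_{\mathcal L(H;H^g)}$ is not bounded in $n$, cf.\ \eqref{stime-non-unif}.

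Next I would prove tightness of the laws of $\{v_n^{(k)}\}_k$ on $C([0,T];H_{\mathrm w})\cap L^2(0,T;L^4)\cap C([0,T];U')$ and then invoke a Skorokhod--Jakubowski representation theorem to obtain, on a new stochastic basis $(\Omega_n,\mathbb F_n,\mathbb P_n)$ carrying a $Y$-cylindrical Wiener process $w_n$, a subsequence converging $\mathbb P_n$-a.s.\ in that topology to a limit $v_n$. Since Rellich compactness fails on $\mathbb R^d$, the auxiliary space $U$ and the compactness lemmas of the Appendix are used here: from the bounds in $L^2(0,T;H^1)\cap L^\infty(0,T;H)$, the compact embeddings $U\hookrightarrow H^1$ and $H^{-1}\hookrightarrow U'$, and fractional-in-time estimates for the drift and for the (now Hilbert--Schmidt) stochastic integral, one also gets strong convergence in $L^2(0,T;L^2(\mathcal O))$ for every bounded $\mathcal O\subset\mathbb R^d$. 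The interpolation inequality $\|v\|_{L^4}^2\le C\|v\|_{L^2}\|v\|_{H^1}$ for $d=2$ (resp.\ $\|v\|_{L^4}^{8/3}\le C\|v\|_{L^2}^{2/3}\|v\|_{H^1}^2$ for $d=3$) then yields $v_n\in L^2(0,T;L^4)$ $\mathbb P_n$-a.s. Passing to the limit in the weak formulation is routine for the linear terms; for $B$ one first takes test functions in $C_c^\infty$, uses the local strong convergence together with \eqref{bL4} and the $L^2(0,T;L^4)$ bound, and then extends to general $\psi\in H^2$ by density; the stochastic term is handled via continuity of $G_n$ on $H$ and convergence of the Galerkin projections, followed by a standard martingale-identification argument. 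This produces the martingale solution $\big((\Omega_n,\mathbb F_n,\mathbb P_n),w_n,v_n\big)$ of \eqref{snsn}, and \eqref{teoGn} survives the limit by lower semicontinuity.

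Finally, when $d=2$ one upgrades $v_n$ to $C([0,T];H)$: in two dimensions $\|B(v,v)\|_{H^{-1}}^2\le\|v\|_{L^4}^4\le C\|v\|_{L^2}^2\|v\|_{H^1}^2$, so $B(v_n,v_n)\in L^2(0,T;H^{-1})$ $\mathbb P_n$-a.s.; since also $Av_n,f\in L^2(0,T;H^{-1})$, $G_n(v_n)\in L^2(0,T;\gamma(Y;H))$ and $v_n\in L^2(0,T;H^1)$, the It\^o formula for $\|v_n(t)\|_H^2$ in the Gelfand triple $H^1\subset H\subset H^{-1}$ (as in Theorem 5.1 of \cite{BM2013} or Theorem 2.1 of \cite{MR2005}) holds as an equality, whence $t\mapsto\|v_n(t)\|_H$ is $\mathbb P_n$-a.s.\ continuous; combined with the weak continuity already available this gives strong continuity of $v_n$ in $H$. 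I expect the tightness step --- making compactness work on the unbounded domain via the space $U$, and ensuring the passage to the limit in $B$ uses only local-in-space convergence --- to be the main obstacle.
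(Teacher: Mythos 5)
Your proposal is correct and follows essentially the same route as the paper, which simply observes (via \eqref{laGn}) that for fixed $n$ the coefficient $G_n$ is a bounded, continuous map into the Hilbert--Schmidt operators $\gamma(Y;H)$, so that the It\^o formula for $\|v_n(t)\|_H^2$ applies and the existence result of Theorem~5.1 of \cite{BM2013} (or Theorem~2.1 of \cite{MR2005}) can be invoked directly. The Galerkin/BDG/Gronwall scheme, the compactness via the auxiliary space $U$ with the Skorokhod--Jakubowski representation, and the $d=2$ continuity upgrade that you sketch are precisely the content of those cited proofs, which the paper does not reproduce.
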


\subsection{Tightness}
The  estimates \eqref{teoGn} are not uniform with respect to  $n$, since
\eqref{stime-non-unif} and \eqref{laGn} show that the
Hilbert-Schmidt norms of the $G_n(v)$ are not uniformly bounded.
Therefore from \eqref{teoGn} one cannot get the tightness of the sequence of the laws of $v_n$.
For this reason, in order to find suitable uniform estimates for the sequence $\bigl(u_n\bigr)_{n}$ we will follow a different path, which in fact has been used in \cite{FG} as well as other works, see for instance \cite{Brz+Gat_1999}. To be precise,
we split our problem in two subproblems in the unknowns $z_n$ and $u_n$ with
$v_n=u_n+z_n$. Given the processes $v_n$ and $W_n$ from  Proposition \ref{esist-vn},
we define the process $z_n$ as the solution of the Ornstein-Uhlenbeck equation
\begin{equation}
dz_n(t)+Az_n(t)\ dt=G_n(v_n(t))\,dw_n(t),\;t\in (0,T];\qquad z_n(0)=0.
\end{equation}
Therefore the process $u_n=v_n-z_n$ solves
\begin{equation}
\frac {d u_n}{dt}(t) +Au_n(t) +B(v_n(t),v_n(t))=f(t),\;t\in (0,T];
\qquad u_n(0)=v_0
\end{equation}

We will first analyze the Ornstein-Uhlenbeck processes; each $z_n$  satisfies the following identity
\begin{equation}
z_n(t)=\int_0^t e^{-(t-s)A}G_n(v_n(s))\,dw_n(s) .
\end{equation}

We have two regularity results. The first of them shows the difference between our approach and that of \cite{FG}, as we employ properties of the It\^o stochastic integral in the 2-smooth Banach spaces as $L^4$ and $H^{\varepsilon,4}_\sol$. The second result also depends on this different approach as it is based on ceratin result from \cite{Brz_1997} established 2-smooth Banach spaces.

\begin{lemma}\label{lemma-z2}
Assume conditions {\bf (G1)} and {\bf (G2)}.
Take any $ g _0 \in [ g ,1)$  and put  $\varepsilon= g _0- g \ge 0$.
Then  for any integer $m\ge 2$
there exists a constant $C$ independent of $n$ (but depending on $m$,
$T$ and $g_0$) such that
\[
\mathbb E_n \|z_n\|_{L^m(0,T;H^{\varepsilon,4}_\sol)}^m
\le (K_{g,4}M)^m C.
\]
In particular $z_n \in L^m(0,T;H^{0,4}_\sol)$ $\mathbb P_n$-a.s.
\end{lemma}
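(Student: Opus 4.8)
The plan is to derive a moment bound for $z_n(t)$ in $H^{\varepsilon,4}_\sol$ that is uniform in both $n$ and $t\in[0,T]$, and then simply integrate over $t$. Recall that each $z_n$ is the stochastic convolution
\[
z_n(t)=\int_0^t e^{-(t-s)A}G_n(v_n(s))\,dw_n(s).
\]
Since for $d=2,3$ the spaces $L^4(\mathbb R^d)$, and hence $H^{\varepsilon,4}_\sol$, are $2$-smooth Banach spaces, the It\^o integral with respect to the cylindrical Wiener process $w_n$ is well defined in them and obeys the Burkholder--Davis--Gundy inequality in terms of $\gamma$-radonifying norms: for every integer $m\ge 2$ there is a constant $C_m$ such that, for each fixed $t$,
\[
\mathbb E_n\Big\|\int_0^t \Phi(s)\,dw_n(s)\Big\|^m_{H^{\varepsilon,4}_\sol}
\le C_m\,\mathbb E_n\Big(\int_0^t \|\Phi(s)\|^2_{\gamma(Y;H^{\varepsilon,4}_\sol)}\,ds\Big)^{m/2}.
\]
It then remains to check that $s\mapsto e^{-(t-s)A}G_n(v_n(s))$ is admissible, i.e. takes values in $\gamma(Y;H^{\varepsilon,4}_\sol)$ with a $\gamma$-norm that is square integrable in $s$ on $(0,t)$; this is exactly what the next step provides.

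The core of the argument is the pointwise bound on the integrand. Using the ideal property of $\gamma$-radonifying operators to factor out the semigroup,
\[
\|e^{-(t-s)A}G_n(v_n(s))\|_{\gamma(Y;H^{\varepsilon,4}_\sol)}
\le \|e^{-(t-s)A}\|_{\mathcal L(H^{-g,4}_\sol;H^{\varepsilon,4}_\sol)}\;\|G_n(v_n(s))\|_{\gamma(Y;H^{-g,4}_\sol)}.
\]
Here $\varepsilon=g_0-g$, so the gain in regularity from $H^{-g,4}_\sol$ to $H^{\varepsilon,4}_\sol$ is exactly $g_0$, and \eqref{semigruppo} gives $\|e^{-(t-s)A}\|_{\mathcal L(H^{-g,4}_\sol;H^{\varepsilon,4}_\sol)}\le M(1+(t-s)^{-g_0/2})$. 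For the second factor, \eqref{stimaGn} together with assumption \textbf{(G2)} yields $\|G_n(v_n(s))\|_{\gamma(Y;H^{-g,4}_\sol)}\le\|G(v_n(s))\|_{\gamma(Y;H^{-g,4}_\sol)}\le K_{g,4}$ (note $v_n(s)\in H$ a.s.). Plugging these in,
\[
\int_0^t \|e^{-(t-s)A}G_n(v_n(s))\|^2_{\gamma(Y;H^{\varepsilon,4}_\sol)}\,ds
\le (MK_{g,4})^2\int_0^t \big(1+(t-s)^{-g_0/2}\big)^2\,ds\le (MK_{g,4})^2\,C_{T,g_0},
\]
where the last integral is finite and bounded uniformly for $t\in[0,T]$ precisely because $g_0<1$ makes $g_0/2<1/2$, so the singularity at $s=t$ is integrable; this elementary computation also supplies the admissibility claimed above.

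Combining the two steps, $\mathbb E_n\|z_n(t)\|^m_{H^{\varepsilon,4}_\sol}\le C_m(MK_{g,4})^m C_{T,g_0}^{m/2}$ for every $t\in[0,T]$, with a constant independent of $n$; integrating in $t$ over $[0,T]$ gives
\[
\mathbb E_n\|z_n\|^m_{L^m(0,T;H^{\varepsilon,4}_\sol)}=\mathbb E_n\int_0^T\|z_n(t)\|^m_{H^{\varepsilon,4}_\sol}\,dt\le (K_{g,4}M)^m\,C
\]
with $C=T\,C_m\,C_{T,g_0}^{m/2}$ depending only on $m$, $T$ and $g_0$. The final assertion then follows by taking $g_0=g$, i.e. $\varepsilon=0$: finiteness of the $m$-th moment of $\|z_n\|_{L^m(0,T;H^{0,4}_\sol)}$ forces $z_n\in L^m(0,T;H^{0,4}_\sol)$ $\mathbb P_n$-a.s. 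Since only an $L^m$-in-time (not a supremum-in-time) bound is required, no factorization of the convolution is needed; the only genuinely delicate point is the interplay in the middle step between the $\gamma$-radonifying ideal property and the time-singular smoothing bound \eqref{semigruppo}, together with the justification that stochastic integration in the $2$-smooth spaces $L^4$ and $H^{\varepsilon,4}_\sol$ is available here. Once these are in place the rest is routine.
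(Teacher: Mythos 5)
Your proof is correct and follows essentially the same route as the paper's: both apply the moment inequality for stochastic integrals in the $2$-smooth space $H^{\varepsilon,4}_{\mathrm{sol}}$ (Proposition \ref{conv-stoc}), factor the semigroup out of the $\gamma$-radonifying norm (the paper writes this via $J^{\varepsilon}=J^{g_0}J^{-g}$, which is the same computation as your operator-norm bound from $H^{-g,4}_{\mathrm{sol}}$ to $H^{\varepsilon,4}_{\mathrm{sol}}$), invoke \eqref{semigruppo}, \eqref{stimaGn} and \textbf{(G2)}, and use $g_0<1$ to integrate the singularity before integrating in $t$. No gaps.
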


\begin{proof}
From Proposition \ref{esist-vn}, for any $s \in [0,T]$ we have
$v_n(s)\in H$.
We will use  Proposition \ref{conv-stoc}
and the fact that {\bf (G2)} and \eqref{stimaGn} imply
that $J^{-g}G_n(v_n(s))\in \gamma(Y;H^{0,4}_\sol)$, for any $s \in [0,T]$. Moreover, since $J^{\varepsilon}=J^{ g _0} J^{\varepsilon- g _0}
=J^{g _0} J^{-g}$, by \cite{Brz_1997} we get
\[\begin{split}
\mathbb E_n \|z_n(t)\|_{H^{\varepsilon,4}_\sol}^m&=
\mathbb E_n \|J^{\varepsilon} z_n(t)\|_{H^{0,4}_\sol}^m\\
&\le  C_m \mathbb E_n\left(\int_0^t
    \| J^{\varepsilon } e^{-(t-s)A}G_n(v_n(s))\|^2_{\gamma(Y;H^{0,4}_\sol)} ds\right)^{m/2}
\\
&\le  C_m \mathbb E_n\left(\int_0^t \|J^{g _0} e^{-(t-s)A}\|_{\mathcal L(H^{0,4}_\sol;H^{0,4}_\sol)}^2
              \|J^{-g} G_n(v_n(s))\|_{\gamma(Y;H^{0,4}_\sol)}^2 ds\right)^{m/2}
\\
&\le  C_m (K_{g,4})^m \left( \int_0^t [M^2+\frac {M^2}{(t-s)^{ g _0}}] ds \right)^{m/2}
 \quad \text{ by } \eqref{semigruppo}\\
&= C_m  (K_{g,4} M )^{m} \left(t+\tfrac 1{1- g _0}t^{1- g _0} \right)^{\frac m2}
    \;\;\text{ since }  g _0<1.
\end{split}\]
Integrating in time we get the result.
\end{proof}

\smallskip
For $0<\beta<1$ let $C^\beta([0,T];H^\delta)$ be  the Banach space
of $H^\delta$-valued $\beta$-H\"older continuous functions endowed with the following  norm
\[
\|z\|_{C^\beta([0,T];H^\delta)}=\sup_{0\le t \le T} \|z(t)\|_{H^\delta}
+
\underset{0\le t< s\le T }{\sup}
\frac{\|z(t)-z(s)\|_{H^\delta}}{|t-s|^\beta}.
\]
\begin{lemma}\label{lemma-z1}
Assume {\bf (G1)} and let
\begin{equation}\label{cond-beta}
0\le\beta<\frac {1-g}2 .
\end{equation}
Then,
for any $p\ge 2$ and $\delta\ge 0$ such that
\begin{equation}\label{cond-zc}
\beta+\frac \delta 2+\frac 1p<\frac {1-g}2
\end{equation}
there exists a modification ${\tilde z}_n$ of $z_n$  such that
\begin{equation}\label{mediaz}
\mathbb E_n\|{\tilde z}_n\|^p_{C^\beta([0,T];H^\delta)}\le \tilde C
\end{equation}
for some constant $\tilde C$  independent of $n$ (but depending
 on $T$, $\beta$, $\delta$ and $p$).
\end{lemma}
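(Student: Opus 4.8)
The plan is to obtain \eqref{mediaz} from a Kolmogorov-type continuity criterion in Banach spaces (the Kolmogorov--Čentsov theorem for processes valued in $H^\delta$), for which we must control two kinds of moments of the stochastic convolution $z_n(t)=\int_0^t e^{-(t-s)A}G_n(v_n(s))\,dw_n(s)$: the pointwise moments $\mathbb E_n\|z_n(t)\|_{H^\delta}^p$, and the increment moments $\mathbb E_n\|z_n(t)-z_n(s)\|_{H^\delta}^p$ with the correct power of $|t-s|$. Recall the standard factorization argument of Da Prato--Kwapień--Zabczyk adapted to $2$-smooth Banach spaces (as in \cite{Brz_1997}): for $\alpha\in(0,1)$ write
\[
z_n(t)=\frac{\sin(\pi\alpha)}{\pi}\int_0^t (t-\sigma)^{\alpha-1}e^{-(t-\sigma)A}Y_n(\sigma)\,d\sigma,
\qquad
Y_n(\sigma)=\int_0^\sigma (\sigma-r)^{-\alpha}e^{-(\sigma-r)A}G_n(v_n(r))\,dw_n(r).
\]

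First I would estimate $Y_n$. Using Proposition \ref{conv-stoc} in the $2$-smooth space $H^{-g}$ together with \eqref{semigruppo} and \eqref{stimaGn}, one gets
\[
\mathbb E_n\|Y_n(\sigma)\|_{H^{-g}}^p
\le C_p\,\mathbb E_n\Bigl(\int_0^\sigma (\sigma-r)^{-2\alpha}\|e^{-(\sigma-r)A}\|^2_{\mathcal L(H^{-g};H^{-g})}\|G_n(v_n(r))\|^2_{\gamma(Y;H^{-g})}\,dr\Bigr)^{p/2}
\le C\,(K_{g,2})^p,
\]
which is finite and uniform in $n$ provided $2\alpha<1$. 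Then one applies the deterministic smoothing of the semigroup: from \eqref{semigruppo}, $\|e^{-(t-\sigma)A}\|_{\mathcal L(H^{-g};H^\delta)}\le M(1+(t-\sigma)^{-(\delta+g)/2})$, so Hölder in $\sigma$ with exponents $p/(p-1)$ and $p$ gives
\[
\|z_n(t)\|_{H^\delta}\le C\Bigl(\int_0^t (t-\sigma)^{(\alpha-1)\frac{p}{p-1}}(1+(t-\sigma)^{-(\delta+g)/2}\tfrac{p}{p-1}})\,d\sigma\Bigr)^{\frac{p-1}{p}}\Bigl(\int_0^t\|Y_n(\sigma)\|_{H^\delta... }^p\,d\sigma\Bigr)^{1/p};
\]
more precisely one keeps $Y_n$ in $H^{-g}$ and lets the semigroup carry it to $H^\delta$, so the deterministic integral is $\int_0^t (t-\sigma)^{(\alpha-1)\frac{p}{p-1}}(t-\sigma)^{-\frac{\delta+g}{2}\frac{p}{p-1}}\,d\sigma$, finite iff $(1-\alpha)\frac{p}{p-1}+\frac{\delta+g}{2}\frac{p}{p-1}<1$, i.e. iff $\alpha>\frac{\delta+g}{2}+\frac1p$. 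Combined with $\alpha<\frac12$, such an $\alpha$ exists exactly when $\frac\delta2+\frac g2+\frac1p<\frac12$, which (since $\beta\ge 0$) is implied by \eqref{cond-zc}; this gives the bound on $\sup_t\mathbb E_n\|z_n(t)\|_{H^\delta}^p$.

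Next I would treat the increments. The standard trick is to pick the Hölder exponent built into the factorization: since $z_n(t)-z_n(s)$ can be written, after the factorization, as a difference of integrals with kernel $(t-\sigma)^{\alpha-1}e^{-(t-\sigma)A}-(s-\sigma)^{\alpha-1}e^{-(s-\sigma)A}$ plus the tail $\int_s^t$, and since $Y_n$ is bounded in $L^p(\Omega;L^{p}(0,T;H^{-g}))$ uniformly in $n$, one obtains
\[
\mathbb E_n\|z_n(t)-z_n(s)\|_{H^\delta}^p\le C\,|t-s|^{\theta p}
\]
for any $\theta<\alpha-\frac{\delta+g}{2}-\frac1p$; choosing $\alpha<\frac{1-g}{2}$ close enough to that bound, the admissible range of $\theta$ covers all $\theta\le\beta$ under hypothesis \eqref{cond-zc}. (Alternatively, one may first prove a stationary-increment bound $\mathbb E_n\|z_n(t)-z_n(s)\|_{H^\delta}^p\le C|t-s|^{\gamma p}$ directly from Proposition \ref{conv-stoc} by splitting $z_n(t)-z_n(s)=\int_s^t e^{-(t-r)A}G_n\,dw_n+\int_0^s(e^{-(t-r)A}-e^{-(s-r)A})G_n\,dw_n$, estimating the first term by $\int_s^t(t-r)^{-(\delta+g)}\,dr$ and the second via $\|e^{-(t-r)A}-e^{-(s-r)A}\|_{\mathcal L(H^{-g};H^\delta)}\le C|t-s|^{\kappa}(s-r)^{-(\delta+g)/2-\kappa}$ for small $\kappa$.) Finally, the Kolmogorov continuity theorem (Da Prato--Zabczyk \cite{dpz}, in the form valid for Banach-space-valued processes) converts the two moment bounds, with $p>1$ and $\theta p>1+\beta p$, into the existence of a modification $\tilde z_n\in C^\beta([0,T];H^\delta)$ with $\mathbb E_n\|\tilde z_n\|_{C^\beta([0,T];H^\delta)}^p\le\tilde C$, where $\tilde C$ depends only on the constants above — hence not on $n$, since every bound used \eqref{semigruppo}, \eqref{stimaGn}, Proposition \ref{conv-stoc} is $n$-independent.

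The main obstacle is bookkeeping the exponents so that a single $\alpha$ simultaneously satisfies $\alpha<\frac{1-g}{2}$ (needed for the stochastic estimate of $Y_n$ and to leave room for the Hölder exponent $\beta$) and $\alpha>\beta+\frac\delta2+\frac1p$ (needed for the deterministic convolution and the increment bound); this is possible precisely under \eqref{cond-beta}--\eqref{cond-zc}, and verifying that the chain of inequalities closes — rather than any single estimate — is where care is required. Uniformity in $n$ is essentially free, since $G_n=R_nG$ and $\|G_n(v)\|_{\gamma(Y;H^{-g})}\le\|G(v)\|_{\gamma(Y;H^{-g})}\le K_{g,2}$ by \eqref{stimaGn}.
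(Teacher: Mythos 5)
Your argument is correct and is essentially the paper's: the authors simply invoke Corollary 3.5 of \cite{Brz_1997}, whose proof is exactly the Da Prato--Kwapie\'n--Zabczyk factorization in the $2$-smooth space $H^{-g}$ that you spell out, combined with \eqref{stimaGn} and {\bf (G1)} for the $n$-uniformity. One caveat: to obtain the stated condition $\beta+\frac\delta2+\frac1p<\frac{1-g}2$ (rather than a version with $\frac2p$) you should close the argument via the deterministic fact that $Y\mapsto\int_0^t(t-\sigma)^{\alpha-1}e^{-(t-\sigma)A}Y(\sigma)\,d\sigma$ maps $L^p(0,T;H^{-g})$ into $C^\beta([0,T];H^\delta)$ whenever $\alpha>\beta+\frac{\delta+g}2+\frac1p$, rather than via the Kolmogorov--\v{C}entsov criterion, which costs an extra $\frac1p$.
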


\begin{proof}
We can see this result as a
special case of Corollary 3.5 of \cite{Brz_1997}. Let us fix $\beta$, $p\ge 2$ and $\delta\ge 0$ as in conditions \eqref{cond-beta} and \eqref{cond-zc}.  Then
there exists a modification $\tilde z_n$ of $z_n$ such that
\begin{equation}\label{stime-holder}
\mathbb E_n \|\tilde z_n\|^p_{C^\beta([0,T];H^\delta)}\le  C
\mathbb E_n \int_0^T \|G_n(v_n(s))\|^p_{\gamma(Y;H^{-g})}ds.
\end{equation}
Using \eqref{stimaGn} and assumption {\bf (G1)} we conclude the proof of
\eqref{mediaz}. 

\end{proof}

In what follow we will collect some easy consequences of the previous two fundamental results.

Firstly, taking $\delta=0$, $\beta<\frac {1-g}2$ and $p$ big enough in
\eqref{stime-holder}, we get
\begin{equation}
\mathbb E_n\| z_n\|_{C^\beta([0,T];H)}\le
\left(\mathbb E_n\|z_n\|^p_{C^\beta([0,T];H)}\right)^{\frac 1p}\le
\left(C T\right)^{\frac 1p} K_{g,2}.
\end{equation}
Secondly, taking $\delta=\frac {1-g}2$, $\beta=0$ and $p$ big enough in
\eqref{stime-holder}, we get
\begin{equation}
\mathbb E_n\| z_n\|_{C([0,T];H^{\frac {1-g}2})}\le
\left(\mathbb E_n\|z_n\|^p_{C([0,T];H\frac {1-g}2)}\right)^{\frac 1p}\le
\left(C T\right)^{\frac 1p} K_{g,2}.
\end{equation}

Hence, a consequence of the above two lemmas is that there exist  finite constants
$E_m$, $E_g$ and $E_{\beta,\delta}$
such that
\[
\sup_n \mathbb E_n \|z_n\|_{L^m(0,T;H^{0,4}_\sol)}^m=(E_m)^m
\]
\[
\sup_n \mathbb E_n\|z_n\|_{C([0,T];H^{\frac{1-g}2})}=E_g
\]
and
\[
\sup_n \mathbb E_n\|z_n\|^p_{C^\beta([0,T];H^\delta)}=(E_{\beta,\delta})^p
\]
where $\beta$ and $\delta$ are as in Lemma \ref{lemma-z1}.

Therefore, by the Chebyshev inequality we infer that for any given  $\eta>0$
\begin{equation}\label{Chebxz}
\sup_n \mathbb P_n(\|z_n\|_{L^m(0,T;H^{0,4}_\sol)}>\eta)\le \frac {E_m}\eta,
\end{equation}
\begin{equation}\label{Chebxz1}
\sup_n \mathbb P_n(\|z_n\|_{C([0,T];H^{\frac{1-g}2})}>\eta)
\le \frac {E_g}\eta,
\end{equation}
\begin{equation}\label{Chebxz2}
\sup_n \mathbb P_n(\|z_n\|_{C^\beta([0,T];H^\delta)}>\eta)
\le \frac {E_{\beta,\delta}}\eta.
\end{equation}
The last three inequalities allow us  to get uniform estimates in probability for the sequence $u_n$, see Proposition \ref{esiste-u},  and consequently,  for $v_n=z_n+u_n$, see Proposition \ref{stimexvn}.

We recall that each $u_n$ solves a deterministic equation with
two forcing terms: one is  random and the other is  deterministic. Indeed
$u_n$ solves
\begin{equation}\label{eq-un}
\dfrac {d u_n}{d t}(t) +Au_n(t) =-B(v_n(t),v_n(t))+f(t)
\end{equation}
with $u_n(0)=v_0$.
We analyze the above equation \eqref{eq-un}  pathwise.

We have the following fundamental results about the uniform estimates of the approximating processes $(u_n)$.
\begin{proposition}\label{esiste-u}
Assume {\bf (G1)} and {\bf (G2)}.
Let $v_0 \in H$ and $f\in L^p(0,T;H^{-1})$ for some $p>2$.
\\
Then, for any $n$ the paths of the process $u_n=v_n-z_n$ solving
\eqref{eq-un} are such that
\[
 u_n\in L^\infty(0,T;H)\cap L^2(0,T;H^1) \cap L^{\frac 8d}(0,T;L^4) \cap C^{1-\frac d4}([0,T];H^{-1})
\]
$\mathbb P_n$-a.s., and
for any $\varepsilon >0$ there exist positive constants
$C_i=C_i(\varepsilon)$ ($i=6,\ldots,10$) such that
\begin{align}
&\sup_n \mathbb P_n(\|u_n\|_{L^\infty(0,T;H)}>C_6)\le \varepsilon\label{S1}
\\
&\sup_n \mathbb P_n(\|u_n\|_{L^2(0,T;H^1)}>C_7)\le \varepsilon\label{S2}
\\
&\sup_n \mathbb P_n(\|u_n\|_{L^{\frac4{1-g}}(0,T;H^{\frac{1-g}2})}>C_8)\le \varepsilon
\label{S3}\\
&\sup_n \mathbb P_n(\|u_n\|_{L^{\frac 8d}(0,T;L^4)}>C_9)\le \varepsilon\label{S4}
\\
&\sup_n \mathbb P_n(\|u_n\|_{C^{1-\frac d4}([0,T];H^{-1})}>C_{10})\le \varepsilon
\label{S5}
\end{align}
\end{proposition}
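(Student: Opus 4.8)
The plan is to treat the equation \eqref{eq-un} for $u_n$ as a deterministic Navier--Stokes type equation with a perturbed nonlinearity, deriving all bounds pathwise and then converting the deterministic estimates (which depend on $\omega$ only through norms of $z_n$) into the uniform-in-probability statements \eqref{S1}--\eqref{S5} via \eqref{Chebxz}--\eqref{Chebxz2}. First I would write $v_n = u_n + z_n$ in the nonlinear term, so that
\[
\frac{du_n}{dt} + Au_n = -B(u_n,u_n) - B(u_n,z_n) - B(z_n,u_n) - B(z_n,z_n) + f,
\]
and then run the standard energy estimate: test with $u_n$, use $\langle B(u_n,u_n),u_n\rangle = 0$ and $\langle B(u_n,z_n),u_n\rangle = -\langle B(u_n,u_n),z_n\rangle$ from \eqref{scambio}, and estimate the cross terms using \eqref{stima4-4}, \eqref{bL4}, Sobolev embedding $H^{(1-g)/2} \subset L^{p}$ and interpolation, absorbing the $\|\nabla u_n\|_{L^2}^2$ factors into the left-hand side by Young's inequality. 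This yields a Gronwall-type differential inequality for $\|u_n(t)\|_H^2$ whose coefficients are polynomial expressions in $\|z_n\|_{C([0,T];H^{(1-g)/2})}$ and $\|z_n\|_{L^m(0,T;H^{0,4}_\sol)}$, giving \eqref{S1} and, by integrating the absorbed dissipation term, \eqref{S2}.

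Next, \eqref{S3} follows from \eqref{S2} together with the bound on $\|z_n\|$ and the inclusion of $H^1$ into $H^{(1-g)/2}$ with the appropriate time exponent (noting $L^2(0,T;H^1)$ controls $L^{4/(1-g)}(0,T;H^{(1-g)/2})$ only after combining with the $L^\infty(0,T;H)$ bound by interpolation in space: $\|w\|_{H^{(1-g)/2}} \le C\|w\|_{H}^{(1+g)/2}\|w\|_{H^1}^{(1-g)/2}$, so that $\int_0^T \|u_n\|_{H^{(1-g)/2}}^{4/(1-g)}\,dt \le C\|u_n\|_{L^\infty(0,T;H)}^{2(1+g)/(1-g)}\int_0^T\|u_n\|_{H^1}^2\,dt$). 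Similarly \eqref{S4}: in $d=2$, $L^\infty(0,T;H)\cap L^2(0,T;H^1) \hookrightarrow L^4(0,T;L^4)$ by Ladyzhenskaya's inequality $\|w\|_{L^4}^2 \le C\|w\|_{L^2}\|\nabla w\|_{L^2}$, while in $d=3$ one gets $L^{8/3}(0,T;L^4)$ by the corresponding interpolation $\|w\|_{L^4}\le C\|w\|_{L^2}^{1/4}\|\nabla w\|_{L^2}^{3/4}$; in both cases the exponent $8/d$ matches. For \eqref{S5} I would read off from the equation that $\frac{du_n}{dt} = -Au_n - B(v_n,v_n) + f$ lies in $L^{q}(0,T;H^{-1})$ for suitable $q$: $Au_n \in L^2(0,T;H^{-1})$ from \eqref{S2}, $B(v_n,v_n)\in L^{4/d}(0,T;H^{-1})$ from \eqref{bL4} combined with \eqref{S4} and the $z_n$ bound, and $f\in L^p(0,T;H^{-1})$ with $p>2$; since $W^{1,4/d}(0,T;H^{-1}) \hookrightarrow C^{1-d/4}([0,T];H^{-1})$, this gives the H\"older bound, with all constants depending only on the constants $C_6,\dots,C_9$ already obtained and the $z_n$-norms, so one more application of Chebyshev closes \eqref{S5}.

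The main obstacle will be bookkeeping the dependence on $z_n$ through the energy inequality in a way that is genuinely uniform in $n$: the cross terms $B(u_n,z_n)$, $B(z_n,u_n)$, $B(z_n,z_n)$ must be estimated so that every factor of $z_n$ appears only in norms for which we have $n$-independent probabilistic bounds (namely $C([0,T];H^{(1-g)/2})$ and $L^m(0,T;H^{0,4}_\sol)$), and the $u_n$-factors must be split between $L^\infty(0,T;H)$, $L^2(0,T;H^1)$ and the part absorbed on the left. The delicate point is that $z_n$ is only in $H^{(1-g)/2}$, not in $H^1$, so the term $\langle B(z_n,z_n),u_n\rangle = -\langle B(z_n,u_n),z_n\rangle$ must be handled via the product estimate in the style of Lemma \ref{stimaB-1-gamma} (bounding it by $\|z_n\|_{L^4}\|z_n\|_{H^{(1-g)/2}}\|u_n\|_{H^1}$ or similar), and likewise $B(z_n,u_n)$ requires care since no derivative can be placed on $z_n$. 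Once the differential inequality is assembled with the correct structure
\[
\frac{d}{dt}\|u_n\|_H^2 + \nu\|\nabla u_n\|_{L^2}^2 \le \varphi_n(t)\,\|u_n\|_H^2 + \psi_n(t),
\]
with $\varphi_n,\psi_n$ integrable and $\|\varphi_n\|_{L^1}, \|\psi_n\|_{L^1}$ controlled by the $z_n$-norms, Gronwall and Chebyshev finish the argument; I expect the only real work beyond this excerpt's established toolkit is the combinatorics of that estimate.
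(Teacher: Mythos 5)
Your proposal is correct and follows essentially the same route as the paper: a pathwise energy estimate for $u_n$ leading to a Gronwall inequality whose coefficients involve only norms of $z_n$ controlled uniformly in $n$ by Lemmas \ref{lemma-z2}--\ref{lemma-z1}, then Chebyshev; interpolation $H^{\frac{1-g}2}=[H,H^1]_{\frac{1-g}2}$ for \eqref{S3}, Gagliardo--Nirenberg for \eqref{S4}, and reading $du_n/dt$ off the equation plus $W^{1,\frac4d}(0,T;H^{-1})\hookrightarrow C^{1-\frac d4}([0,T];H^{-1})$ for \eqref{S5}. Two small remarks. First, the ``delicate point'' you flag is not actually delicate: the paper does not expand $B(v_n,v_n)$ into four terms at all, but uses \eqref{scambio} once to write $-\langle B(v_n,v_n),u_n\rangle=\langle B(v_n,u_n),z_n\rangle$ and bounds this by $\|v_n\|_{L^4}\|\nabla u_n\|_{L^2}\|z_n\|_{L^4}$ via \eqref{stima4-4}; every $z_n$ then enters only through $\|z_n\|_{L^4}$, for which Lemma \ref{lemma-z2} gives the uniform bound, so no estimate in the style of Lemma \ref{stimaB-1-gamma} is needed here (your four-term version closes the same way after flipping each term with \eqref{scambio}). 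Second, the one step you do not address is the a priori regularity needed to legitimize testing the equation with $u_n$ and applying the chain rule to $\|u_n\|_{L^2}^2$: since $B(v_n,v_n)$ is only in $L^{\frac4d}(0,T;H^{-1})$, for $d=3$ this is not the classical $L^2$-in-time setting, and the paper first establishes $u_n\in L^{\frac43}(0,T;H^1)$ via de Simon's maximal regularity for the Stokes semigroup before running the energy identity; this should be inserted into your argument.
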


\begin{proof}
By definition and merging the regularity of $v_n$ and $z_n$ we have that
$u_n=v_n-z_n \in C([0,T];H_{\mathrm{w}})$   $\mathbb P_n$-a.s.

Using \eqref{bL4} and the Gagliardo-Nirenberg inequality, for $d=2$ or $d=3$,
\begin{equation}\label{GaNi}
\|v\|_{L^4}\le C \|v\|_{L^2}^{1-\frac d4} \|\nabla v\|_{L^2}^{\frac d4}
\end{equation}
 we get
\begin{align}
\int_0^T\|B(v_n(s),v_n(s))\|^{\frac 4d}_{H^{-1}}ds
  &\le \int_0^T \|v_n(s)\|_{L^4}^{\frac 8d} ds \label{B-L4}\\
  &\le C \|v_n\|^{\frac 8d -2}_{L^\infty(0,T;L^2)}
  \int_0^T \|\nabla v_n(s)\|^2_{L^2}  ds \label{B-L2-H1}.
\end{align}
This means that (pathwise) the r.h.s. of equation \eqref{eq-un} belongs to
 $L^{\frac 4d}(0,T;H^{-1})$.

Now, let us  to prove that $u_n \in L^{\frac 4d}(0,T;H^1)$.
For $d=2$, it is a classical result for the Stokes equation, see \cite{T}.
For $d=3$ let us use  classical maximal  regularity theory for linear parabolic
equations, see e.g. \cite{dS}.
We write  $u_n$ as a mild solution
\[
u_n(t)=e^{-tA}v_0+\int_0^te^{-(t-s)A}[f(s)-B(v_n(s),v_n(s))]ds =:e^{-tA}v_0+g_n(t),\; t\in [0,T].
\]
Then,   using  the result from  \cite{dS} we have the following.
If $ f-B(v_n,v_n) \in L^{\frac 43}(0,T;H^{-1})$, then  $g_n \in L^{\frac 43}(0,T;H^{1})$.
Moreover, since  by \eqref{semigruppo},
$\|e^{-tA}v_0\|_{H^1}\le M(1+\frac 1 {t^{1/2}})\|v_0\|_{H}$  for $t>0$,
we infer that for any $p<2$,
$e^{-\cdot A}v_0\in L^p(0,T;H^1)$. Hence, for $d=3$ we get
$u_n \in L^{\frac 43}(0,T;H^1)$.
\\
Thus, the term
 $\langle B(v_n(t),v_n(t)), u_n(t)\rangle $, appearing in a while,  is meaningful for almost every $t$ thanks to \eqref{stima4-4} and is equal to $-\langle B(v_n(t),u_n(t)), v_n(t)\rangle$ thanks to \eqref{scambio}.

Now we look for suitable estimates giving the tightness of the sequence
of laws of $u_n$.
We begin with the usual energy estimate:
\begin{equation}\label{base-ener}
\frac 12 \frac {d}{dt}\|u_n(t)\|_{L^2}^2 +\|\nabla u_n(t)\|_{L^2}^2
= -\langle B(v_n(t),v_n(t)), u_n(t)\rangle + \langle f(t), u_n(t)\rangle
\end{equation}
We consider the trilinear term; by means of \eqref{scambio}-\eqref{stima4-4},
the Gagliardo-Nirenberg inequality \eqref{GaNi} and the Young inequality
we get
\[
\begin{split}
-\langle B(v_n,v_n), u_n\rangle
&=\langle B(v_n,u_n), v_n\rangle\\
&=\langle B(v_n,u_n), z_n\rangle+ \langle B(v_n,u_n), u_n\rangle\\
&=\langle B(v_n,u_n), z_n\rangle\\
&\le \|v_n\|_{L^4} \|\nabla u_n\|_{L^2}\|z_n\|_{L^4}\\
&\le \|u_n\|_{L^4} \|\nabla u_n\|_{L^2}\|z_n\|_{L^4}+
 \|z_n\|_{L^4} \|\nabla u_n\|_{L^2}\|z_n\|_{L^4}\\
&\le C \|u_n\|^{1-\frac d4}_{L^2}\|\nabla u_n\|_{L^2}^{1+\frac d4}\|z_n\|_{L^4}+
  \|\nabla u_n\|_{L^2}\|z_n\|_{L^4}^2\\
&\le \frac 14 \|\nabla u_n\|_{L^2}^2
   +C \|u_n\|_{L^2}^2 \|z_n\|_{L^4}^{\frac 8{4-d}}+  C \|z_n\|_{L^4}^4 .
\end{split}
\]
Estimating
\[
|\langle f, u_n\rangle|\le \|f\|_{H^{-1}}\|u_n\|_{H^1}\le \frac 14 \|u_n\|_{H^1}^2
+C \|f\|_{H^{-1}}^2=\frac 14 \|u_n\|_{L^2}^2+\frac 14 \|\nabla u_n\|_{L^2}^2
+C \|f\|_{H^{-1}}^2
\]
from \eqref{base-ener} we get
\begin{multline*}
\frac 12 \frac {d}{dt}\|u_n(t)\|_{L^2}^2 +\|\nabla u_n(t)\|_{L^2}^2
\le
\frac 12 \|\nabla u_n(t)\|_{L^2}^2\\+ \frac 14 \|u_n(t)\|_{L^2}^2+
C\|z_n(t)\|_{L^4}^{\frac 8{4-d}}
\|u_n(t)\|_{L^2}^2+ C \|z_n(t)\|_{L^4}^4+C \|f(t)\|_{H^{-1}}^2
\end{multline*}
i.e.
\begin{equation}\label{diseq-vn}
\frac {d}{dt}\|u_n(t)\|_{L^2}^2 +\|\nabla u_n(t)\|_{L^2}^2\\
\le \phi_n(t) \|u_n(t)\|_{L^2}^2+\psi_n(t)
\end{equation}
with
\[ \phi_n(t)=\frac 12 +2C\|z_n(t)\|_{L^4}^{\frac 8{4-d}} \mbox{ and }
\psi_n(t)=2C \|z_n(t)\|_{L^4}^4+2C \|f(t)\|_{H^{-1}}^2, \;\; t\in [0,T],\]
By Lemma \ref{lemma-z2}, we have $\phi_n,\psi_n \in L^1(0,T)$  uniformly in $n$.
Hence,  from  Gronwall Lemma applied to inequality
\begin{equation}
\frac {d}{dt}\|u_n(t)\|_{L^2}^2\le \phi_n(t) \|u_n(t)\|_{L^2}^2+\psi_n(t)
\end{equation}
we infer that  there exist constants $C_1,C_2>0$ such that for all $n$
\begin{equation}\label{vH}
\begin{split}
\sup_{0\le t\le T} \|u_n(t)\|_{L^2}^2 &\le
\|v_0\|_H^2 e^{\int_0^T \phi_n(r) dr}+
 \int_0^T e^{\int_s^T \phi_n(r) dr}\psi_n(s)ds\\
&\le \|v_0\|_H^2 e^{\int_0^T \phi_n(r) dr}+
 e^{\int_0^T \phi_n(r) dr}\int_0^T \psi_n(s)ds
 \\
         & \le  C_1 \|v_0\|_H^2 +C_2.
\end{split}
\end{equation}
Next, integrating inequality \eqref{diseq-vn} in time    we get
\begin{equation}\label{vV}
\begin{split}
\int_0^T \|\nabla u_n(t)\|_{L^2}^2 dt&\le \|v_0\|_H^2
       +\int_0^T \Big( \phi_n(t) \|u_n(t)\|_{L^2}^2+\psi_n(t)\Big)dt\\
&\le \|v_0\|_H^2
       +\left(\sup_{0\le t\le T}\|u_n(t)\|_{L^2}^2 \right)
        \int_0^T \phi_n(t) dt +\int_0^T  \psi_n(t)dt
        \\
        & \le  C_3 \|v_0\|_H^2 +C_4.
        \end{split}
\end{equation}

Bearing in mind inequality
\eqref{Chebxz}, we infer  that for any $\varepsilon>0$ there exist  constants
$\eta_1, \eta_2>0$ such that
\[
\sup_n P(\textstyle\int_0^T \phi_n(t) dt>\eta_1)\le \varepsilon, \qquad
\displaystyle
\sup_n \mathbb{P}(\int_0^T \psi_n(t) dt>\eta_2)\le \varepsilon.
\]
Therefore, from \eqref{vH}-\eqref{vV} we get that for any $\varepsilon>0$
there exist suitable constants  $R_1,R_2>0$  such that
\[
\sup_n \mathbb{P}(\|u_n\|_{L^\infty(0,T;H)}>R_1)\le \varepsilon
\qquad
\sup_n \mathbb{P}(\|\nabla u_n\|_{L^2(0,T;L^2)}>R_2)\le \varepsilon .
\]
This proves \eqref{S1}.\\
The last two inequalities  also imply that for any $\varepsilon>0$ there exists a suitable constant  $R_3$ such that
\[
\sup_n \mathbb{P}(\|u_n\|_{L^2(0,T;H^1)}>R_3)\le \varepsilon .
\]
This proves \eqref{S2}.\\
Since  $H^{\frac{1-g}2}=\left[H,H^1\right]_{\frac{1-g}2}$, by the H\"older inequality and the properties of  the complex interpolation
we infer that  $\|u_n\|_{L^{\frac4{1-g}}(0,T;H^{\frac{1-g}2})}
\le C \|u_n\|_{L^\infty(0,T;H)}^{\frac{1+g}{2}} \|u_n\|_{L^2(0,T;H^1)}^{\frac{1-g}2}$.
Therefore
for any $\varepsilon>0$ there exists a  suitable constant  $R_4$ such that
\[
\sup_n \mathbb{P}(\|u_n\|_{L^{\frac4{1-g}}(0,T;H^{\frac{1-g}2})}>R_4)\le \varepsilon .
\]
This proves \eqref{S3}.

By means of the Gagliardo-Nirenberg inequality \eqref{GaNi} we also deduce  that
for any $\varepsilon>0$
there exists a suitable constant $R_5>0$  such that
\[
\sup_n \mathbb{P}(\|u_n\|_{L^{\frac 8d}(0,T;L^4)}>R_5)\le \varepsilon.
\]
This proves \eqref{S4}.

From  equation \eqref{eq-un} for $u_n$  we infer that
\[\begin{split}
\|\frac{du_n}{dt}\|_{L^{\frac 4d}(0,T;H^{-1})} &\le \|Au_n\|_{L^{\frac 4d}(0,T;H^{-1})}+
\|B(v_n,v_n)\|_{L^{\frac 4d}(0,T;H^{-1})}+\|f\|_{L^{\frac 4d}(0,T;H^{-1})}
\\
&\le  \|u_n\|_{L^{\frac 4d}(0,T;H^1)}+\|v_n\|^2_{L^{\frac 8d}(0,T;L^4)}
    + \|f\|_{L^{\frac 4d}(0,T;H^{-1})}  \text{ using } \eqref{B-L4}\\
&\le C \|u_n\|_{L^2(0,T;H^1)}+ 2 \|u_n\|^2_{L^{\frac 8d}(0,T;L^4)}
  +2 \|z_n\|^2_{L^{\frac 8d}(0,T;L^4)}+C \|f\|_{L^2(0,T;H^{-1})}.
\end{split}
\]
Using \eqref{S1}, \eqref{S4} and \eqref{Chebxz},
we find that for any $\varepsilon>0$ there exists a suitable constant   $R_6>0$ such that
\[
\sup_n \mathbb{P}(\|\frac {du_n}{dt}\|_{L^{\frac 4d}(0,T;H^{-1})}>R_6)\le \varepsilon.
\]
By the Sobolev embedding theorem,
$H^{1,\frac4d}(0,T) =\{u \in L^{\frac 4d}(0,T):
u^\prime \in L^{\frac 4d}(0,T)\}\subset C^{1-\frac d4}([0,T])$.
Hence we infer  that there exists a  constant    $R_7>0$ such that
\[
\sup_n \mathbb{P}(\|u_n\|_{C^{1-\frac d4}([0,T];H^{-1})}>R_7)\le \varepsilon.
\]
This proves \eqref{S5}. This completes the proof of Proposition \ref{esiste-u}.
\end{proof}

Now we need to apply a  tightness argument in order to pass to the limit.
Merging the estimates \eqref{Chebxz}-\eqref{Chebxz2} for the process $z_n$
and those for $u_n$ from Proposition \ref{esiste-u} we get  estimates for $v_n=z_n+u_n$.  These estimates in probability are uniform with respect to $n$.
\begin{proposition}\label{stimexvn}
Assume {\bf (G1)} and {\bf (G2)}, let $v_0 \in H$ and $f\in L^p(0,T;H^{-1})$ for some $p>2$
and let $\left((\Omega_n,{\mathbb F}_n, {\mathbb P}_n), w_n,v_n\right)$
be a martingale solution of \eqref{snsn} as given in Proposition \ref{esist-vn}.
\\
Then there exist $\gamma, \delta >0$ such that
for any $\varepsilon >0$ there exist positive constants
$C_i=C_i(\varepsilon)$ ($i=1,\ldots,5$) such that
\[
\sup_n \mathbb P_n(\|v_n\|_{L^\infty(0,T;H)}>C_1)\le \varepsilon
\]
\[
\sup_n \mathbb P_n(\|v_n\|_{L^2(0,T;H^\delta)}>C_2)\le \varepsilon
\]
\[
\sup_n \mathbb P_n(\|v_n\|_{L^{\frac4{1-g}}(0,T;H^{\frac{1-g}2})}>C_3)\le \varepsilon
\]
\[
\sup_n \mathbb P_n(\|v_n\|_{L^{\frac 8d}(0,T;L^4)}>C_4)\le \varepsilon
\]
\[
\sup_n \mathbb P_n(\|v_n\|_{C^{\gamma}([0,T];H^{-1})}>C_5)\le \varepsilon
\]
\end{proposition}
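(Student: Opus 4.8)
The plan is to obtain all five estimates as immediate consequences of the decomposition $v_n=u_n+z_n$, the triangle inequality in each relevant norm, and a union bound, feeding in the estimates already available for $z_n$ in \eqref{Chebxz}--\eqref{Chebxz2} and for $u_n$ in Proposition \ref{esiste-u}. The only probabilistic fact I will use is the trivial one: for nonnegative random variables $X,Y$ and numbers $a,b>0$,
\[
\{X+Y>a+b\}\subset\{X>a\}\cup\{Y>b\},\qquad\text{hence}\qquad
\mathbb P_n(X+Y>a+b)\le\mathbb P_n(X>a)+\mathbb P_n(Y>b).
\]
Taking $X=\|z_n\|_{\mathcal X}$ and $Y=\|u_n\|_{\mathcal X}$ for a normed space $\mathcal X$ containing the paths of both processes, and using $\|v_n\|_{\mathcal X}\le\|z_n\|_{\mathcal X}+\|u_n\|_{\mathcal X}$, a pair of estimates ``at level $\varepsilon/2$'' for $z_n$ and $u_n$ produces one ``at level $\varepsilon$'' for $v_n$, with constant the sum of the two constants.

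The remaining work is just to pick, for each of the five norms, a space $\mathcal X$ into which both $z_n$ and $u_n$ map, using the embeddings that hold on the bounded interval $[0,T]$. I will take $\delta:=\frac{1-g}2\in(0,1)$, fix some $\beta\in(0,\frac{1-g}2)$, and set $\gamma:=\min\{1-\frac d4,\beta\}>0$. For $L^\infty(0,T;H)$, I combine \eqref{S1} with \eqref{Chebxz1} and $C([0,T];H^{\frac{1-g}2})\subset C([0,T];H)\subset L^\infty(0,T;H)$. For $L^2(0,T;H^\delta)$, I combine \eqref{S2} and $L^2(0,T;H^1)\subset L^2(0,T;H^\delta)$ with \eqref{Chebxz1} and $C([0,T];H^{\frac{1-g}2})\subset L^2(0,T;H^\delta)$; it is precisely here that $\delta$ must be chosen $\le\frac{1-g}2$. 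For $L^{\frac4{1-g}}(0,T;H^{\frac{1-g}2})$, I combine \eqref{S3} with \eqref{Chebxz1} and $C([0,T];H^{\frac{1-g}2})\subset L^{\frac4{1-g}}(0,T;H^{\frac{1-g}2})$. For $L^{\frac8d}(0,T;L^4)$, I combine \eqref{S4} with \eqref{Chebxz} (used with the integer $m=4\ge\frac8d$, so that $L^4(0,T;H^{0,4}_\sol)\subset L^{\frac8d}(0,T;H^{0,4}_\sol)$ on $[0,T]$) and $H^{0,4}_\sol\subset L^4$. For $C^\gamma([0,T];H^{-1})$, I combine \eqref{S5} and $C^{1-\frac d4}([0,T];H^{-1})\subset C^\gamma([0,T];H^{-1})$ with \eqref{Chebxz2} taken with $\delta=0$, using $C^\beta([0,T];H)\subset C^\gamma([0,T];H^{-1})$.

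I do not expect any genuine obstacle: the statement is a bookkeeping corollary of the two Ornstein--Uhlenbeck lemmas (Lemmas \ref{lemma-z2} and \ref{lemma-z1}) and of Proposition \ref{esiste-u}, with no new estimate required. The only points that need a moment's care are the compatibility of the target spaces --- forcing $\delta\le\frac{1-g}2$ and $\gamma\le 1-\frac d4$ --- and the observation that every estimate available for $z_n$ is in a space of continuous (indeed H\"older) $H^s$-valued paths, which on the finite interval $[0,T]$ embeds into each of the Lebesgue-in-time spaces appearing in the list for $v_n$.
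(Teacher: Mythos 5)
Your proposal is correct and is exactly the argument the paper intends: the text surrounding Proposition \ref{stimexvn} gives no proof beyond the sentence that the estimates for $v_n=z_n+u_n$ follow by ``merging'' \eqref{Chebxz}--\eqref{Chebxz2} with Proposition \ref{esiste-u}, together with the remark that $\gamma=\min(\beta,1-\tfrac d4)$ --- precisely your triangle-inequality-plus-union-bound bookkeeping. Your explicit choices of $\delta$ and $\gamma$ and the embeddings you invoke (in particular $C([0,T];H^{\frac{1-g}2})\subset L^q(0,T;H^{\frac{1-g}2})$ on the bounded interval and $H^{0,4}_{\mathrm{sol}}\subset L^4$) are all consistent with what the paper has established.
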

Actually, $\gamma=\min(\beta,1-\frac d4)$ with $\beta$ and $\delta$ fulfilling
\eqref{cond-zc}. Therefore $0<\gamma<\frac12 $ and $0<\delta<1$.

\subsection{Convergence}
The latter result provides the tightness to pass to the limit. We have
\begin{theorem}\label{th:esist}
Let $v_0 \in H$ and $f \in L^p(0,T;H^{-1})$ for some $p>2$.
If {\bf (G1)}-{\bf (G2)} are satisfied,
then   there exists a martingale solution
$\left((\tilde\Omega,\tilde{\mathbb F}, \tilde{\mathbb P}), \tilde w,\tilde v\right)$ of \eqref{sns};
in addition
\begin{equation}\label{reg-x-uni}
\tilde v \in L^{\frac4{1-g}}(0,T;H^{\frac{1-g}2})
\cap L^{\frac 8d}(0,T;L^4)
 \qquad \tilde{\mathbb P}-a.s.
\end{equation}
Moreover, if
 $d=2$ then   $\tilde v \in C([0,T];H)$ $\tilde{\mathbb P}$-a.s.
\end{theorem}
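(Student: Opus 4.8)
The strategy is the standard compactness/Skorokhod argument, built directly on the uniform-in-$n$ estimates of Proposition~\ref{stimexvn}. First I would fix the path space on which the laws of $v_n$ live. In view of the five uniform bounds, the natural choice is
\[
\mathcal{Z}:=C([0,T];H_{\mathrm{w}})\cap L^2(0,T;H^{\delta}_{\mathrm{loc}})\cap
L^{\frac{4}{1-g}}(0,T;H^{\frac{1-g}2}_{\mathrm{w}})\cap L^{\frac 8d}(0,T;L^4_{\mathrm{loc}}),
\]
equipped with the supremum of the corresponding (weak/local-strong) topologies. The point is that the embeddings of $C^{\gamma}([0,T];H^{-1})$, $L^\infty(0,T;H)$, $L^2(0,T;H^\delta)$ etc.\ into the relevant components of $\mathcal{Z}$ are compact once one localises in space to compensate for the unbounded domain; here one invokes the compactness lemmas mentioned in the Appendix (in the spirit of \cite{BM0_2015}, \cite{BM2013}), using $H^{-1}\hookrightarrow\hookrightarrow U'$ and interpolation between $L^\infty(0,T;H)$, $L^2(0,T;H^\delta)$ and the H\"older bound in $H^{-1}$. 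Chebyshev plus these compact embeddings give tightness of $\{\mathrm{Law}(v_n)\}$ on $\mathcal{Z}$; one tightens simultaneously the joint laws of $(v_n,w_n)$ on $\mathcal{Z}\times C([0,T];Y_0)$ for a suitable larger Hilbert space $Y_0\supset Y$ into which $Y$ is Hilbert--Schmidt embedded.

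Next, apply the Jakubowski--Skorokhod representation theorem (valid on quasi-Polish spaces, hence on $\mathcal{Z}$ with its weak-type topologies) to obtain a new probability space $(\tilde\Omega,\tilde{\mathbb F},\tilde{\mathbb P})$ and random variables $(\tilde v_n,\tilde w_n)$, $(\tilde v,\tilde w)$ with the same laws, such that $\tilde v_n\to\tilde v$ in $\mathcal{Z}$ and $\tilde w_n\to\tilde w$ $\tilde{\mathbb P}$-a.s. The almost-sure convergence in $\mathcal{Z}$, together with uniform integrability coming from the moment bounds (raise each estimate in Proposition~\ref{stimexvn} to a suitable power, or simply note the bounds hold in probability and use Vitali/Fatou), transfers the uniform bounds to $\tilde v$, yielding in particular \eqref{reg-x-uni} and $\tilde v\in C([0,T];H_{\mathrm w})$. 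One must also check that $\tilde w$ is a genuine $Y$-cylindrical Wiener process with respect to the filtration generated by $(\tilde v,\tilde w)$; this is routine (the characteristic-function/L\'evy characterisation passes to the limit).

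The core analytic step is passing to the limit in the weak formulation. Each $\tilde v_n$ satisfies, for $\psi\in H^2$,
\[
\langle \tilde v_n(t),\psi\rangle+\int_0^t\langle A\tilde v_n,\psi\rangle\,ds
+\int_0^t\langle B(\tilde v_n,\tilde v_n),\psi\rangle\,ds
=\langle v_0,\psi\rangle+\int_0^t\langle f,\psi\rangle\,ds
+\Big\langle\int_0^t G_n(\tilde v_n)\,d\tilde w_n,\psi\Big\rangle .
\]
The linear terms converge by the weak continuity in $H$ (for the first) and by $\tilde v_n\to\tilde v$ in $L^2(0,T;H^\delta)$ against $A\psi\in L^2$ locally (for the second). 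For the nonlinear term one writes $\langle B(\tilde v_n,\tilde v_n),\psi\rangle=-\langle B(\tilde v_n,\psi),\tilde v_n\rangle=\sum_{i,j}\int \tilde v_n^i\tilde v_n^j\,\partial_i\psi^j$ and uses strong convergence of $\tilde v_n$ in $L^2(0,T;L^4_{\mathrm{loc}})$ (so $\tilde v_n\otimes\tilde v_n\to\tilde v\otimes\tilde v$ in $L^1(0,T;L^2_{\mathrm{loc}})$), together with the decay of $\partial_i\psi$ for $\psi\in H^2$ to handle the tail at infinity; the uniform $L^{8/d}(0,T;L^4)$ bound controls the remaining integrability. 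For the stochastic integral, combine $G_n\to G$ strongly in $\gamma(Y;H^{-g})$ (estimate \eqref{lim-Gn}), the uniform bound \eqref{stimaGn}, continuity of $G$, and $\tilde v_n\to\tilde v$, to identify $\int_0^t G_n(\tilde v_n)\,d\tilde w_n\to\int_0^t G(\tilde v)\,d\tilde w$ in, say, $L^2(\tilde\Omega;H^{-g})$ after pairing with $\psi$ (here \eqref{E-conv-stoc} and a standard martingale-representation/limit argument for stochastic integrals under Skorokhod, as in \cite{BM2013}, do the job). This yields \eqref{sol-path} for $(\tilde v,\tilde w)$.

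The main obstacle is the compactness in the unbounded domain $\mathbb R^d$: the estimates of Proposition~\ref{stimexvn} are global in space but Rellich-type compactness fails on $\mathbb R^d$, so one genuinely needs the localised/weighted compactness criteria (the Appendix lemmas, the space $U$ with $H^{-1}\hookrightarrow\hookrightarrow U'$) and must be careful that the nonlinear term's limit identification survives the passage from local to global convergence — this is exactly where the test function $\psi\in H^2$ (rather than merely $H^1$) and its spatial decay are used. Finally, for $d=2$ the extra claim $\tilde v\in C([0,T];H)$ follows because, for $d=2$, $B(v,v)\in L^2(0,T;H^{-1})$ whenever $v\in L^\infty(0,T;H)\cap L^2(0,T;H^1)$ (the 2D estimate \eqref{GaNi} with $d=2$ gives $L^2$ in time), so $\tilde v$ solves an $H$-valued equation to which the It\^o formula for $\|\tilde v(t)\|_H^2$ applies — exactly as in the $d=2$ part of Proposition~\ref{esist-vn} — upgrading weak continuity in $H$ to strong continuity; alternatively one quotes the pathwise uniqueness of Section~\ref{sec-u} together with the regularity already obtained.
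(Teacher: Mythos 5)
Your main line of argument coincides with the paper's: uniform estimates in probability from Proposition~\ref{stimexvn}, tightness on a quasi--Polish path space built from weak and local topologies (the paper uses $Z=L^{8/d}_{\mathrm w}(0,T;L^4)\cap C([0,T];U')\cap L^2(0,T;H_{\mathrm{loc}})\cap C([0,T];H_{\mathrm w})$ and Lemma~\ref{lemma-tight}), the Jakubowski version of the Skorokhod theorem, and identification of the limit by passing to the limit in the martingale problem at the level of $J^{-g}\tilde M_k$ in $H^{-g}$ --- which you correctly do, since the quadratic variation in $H$ is unavailable. Up to the cosmetic difference in the choice of path space, this part is sound and is essentially the paper's proof.

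There is, however, a genuine gap in your justification of the last claim ($d=2\Rightarrow\tilde v\in C([0,T];H)$). First, the hypothesis you invoke fails: $\tilde v$ is \emph{not} in $L^2(0,T;H^1)$. The uniform estimates only give $v_n\in L^2(0,T;H^\delta)$ with $\delta<1-g<1$, because the stochastic convolution $z_n$ never reaches $H^1$ under {\bf (G1)}--{\bf (G2)} --- that is exactly the roughness of the noise; only the correction $u_n=v_n-z_n$ lies in $L^2(0,T;H^1)$. Second, and more fundamentally, the It\^o formula for $\|\tilde v(t)\|_H^2$ requires $G(\tilde v)\in\gamma(Y;H)$, i.e.\ Hilbert--Schmidt into $H$, which is precisely what is \emph{not} assumed here ({\bf (G1)} only gives $\gamma(Y;H^{-g})$ with $g>0$); this is the stated obstruction motivating the whole paper, and it is why It\^o calculus in $H$ is applied only to the regularised equations with $G_n$. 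Your fallback via pathwise uniqueness does not repair this: it needs {\bf (G3)}, which is not assumed in Theorem~\ref{th:esist}, and uniqueness in any case does not upgrade weak to strong continuity. The correct route is through the decomposition $\tilde v=\tilde u+\tilde z$ with $\tilde z(t)=\int_0^t e^{-(t-s)A}G(\tilde v(s))\,d\tilde w(s)$: the estimates of Lemma~\ref{lemma-z1}, whose constants are independent of $n$ and apply verbatim to $G$, give $\tilde z\in C([0,T];H^\delta)\subset C([0,T];H)$, while for $d=2$ the deterministic part satisfies $\tilde u\in L^2(0,T;H^1)$ and $\tilde u'\in L^2(0,T;H^{-1})$ (using \eqref{GaNi} with $d=2$ to put $B(\tilde v,\tilde v)$ in $L^2(0,T;H^{-1})$), whence $\tilde u\in C([0,T];H)$ by the standard Lions--Magenes lemma.
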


\begin{proof}
One proceeds as in \cite{BM2013}.

We fix $0<\gamma<\frac 12$ and $0<\delta<1$ appearing in Proposition \ref{stimexvn} and define
the space
\[
Z=L^{\frac 8d}_{\mathrm{w}}(0,T;L^4)\cap C([0,T];U^\prime)
\cap L^2(0,T;H_{\mathrm{loc}})\cap C([0,T];H_{\mathrm{w}})
\]
with the topology
$\mathcal T$ given by  the supremum of the corresponding topologies.
According to Lemma \ref{lemma-tight}, Proposition \ref{stimexvn} provides that
 the sequence of laws of the processes $v_n$ is tight in $Z$.

By the Jakubowski's generalization  of the
Skohorokod Theorem to  nonmetric spaces, see \cite{BM2013} and \cite{Ja1997}, there exist a subsequence
$\{v_{n_k}\}_{k=1}^\infty$,
a stochastic basis $(\tilde \Omega, \tilde{\mathbb F}, \tilde{\mathbb P})$, $Z$-valued
Borel measurable variables $\tilde v$ and $\{\tilde v_k\}_{k=1}^\infty$ such that
for any $k$ the laws of $v_{n_k}$ and $\tilde v_k$ are the same and $\tilde v_k$ converges to $\tilde v$ $\tilde {\mathbb P}$-a.s.
with the topology $\mathcal T$.

Since each $\tilde v_k$ has the same law as $v_{n_k}$, it is a martingale solution to equation \eqref{snsn}; therefore each  process
\[
\tilde M_k(t):=\tilde v_k(t)-\tilde v_k(0)+\int_0^t A \tilde v_k(s)ds +\int_0^t B(\tilde v_k(s),\tilde v_k(s)) ds
-\int_0^t f(s)ds
\]
is a martingale with quadratic variation
\[
\lb\lb\;\tilde M_k\;\rb\rb (t)=\int_0^t G_k(\tilde v_k(s))G_k(\tilde v_k(s))^*ds.
\]
It is now classical to show, see e.g. \cite{BM2013}, that
\[
\langle \tilde M_k(t)-\tilde M(t),\phi\rangle \to 0
\]
for any $\phi\in H^2$ and every $t \in [0,T]$,
where
\[
\tilde M(t)=\tilde v(t)-\tilde v(0)+\int_0^t A \tilde v(s)ds +\int_0^t B(\tilde v(s),\tilde v(s)) ds
-\int_0^t f(s)ds.
\]
There is no convergence of the quadratic variation processes, but the quadratic variation of the more regular process $J^{-g}\tilde M_k$
\[
\lb\lb\;J^{-g}\tilde M_k\rb\rb (t)=\int_0^t  J^{-g}G_k(\tilde v_k(s))G_k(\tilde v_k(s))^* J^{-g}ds.
`\]
is finite and thanks to \eqref{lim-Gn} it converges to
\[
\int_0^t  J^{-g}G(\tilde v(s))G(\tilde v(s))^* J^{-g}ds
\]
as $n \to \infty$.

With usual martingale representation theorem, see e.g. \cite{dpz} (and newer approaches to this result: \cite{Brz+Ondr_2011_AP} and \cite{BGJ_2009}) we can conclude that
there exists a $Y$-cylindrical Wiener process $\tilde w$ such that
\[
J^{-g}\tilde M(t)=\int_0^t J^{-g}G(\tilde v(s)) \,d\tilde w(s).
\]
Therefore $\tilde v$ is a martingale solution to \eqref{sns}.

Finally, \eqref{reg-x-uni} comes from the uniform estimates of Proposition
\ref{stimexvn}.
\end{proof}

\begin{remark}
Our techniques, although primarily devised in order to treat unbounded domains,
can be applied for bounded domains as well.
So, let us compare our assumptions with that of the seminal paper \cite{FG}
by Flandoli and G\c{a}tarek, where only
smooth and bounded domains are considered.

Firstly, let us observe that condition {\bf (G2)},
although involving the  Banach space $H^{-g,4}_\sol$,
is on the same level of regularity as condition {\bf (G1)}. Secondly,
when working in a  box with the periodic boundary conditions
the assumptions {\bf (G1)} and {\bf (G2)} can be reduced to only one assumption
according to Remark \ref{oss-su-G} ii). Indeed,  one can choose  $\{e_j\}$
to be the eigenvalues of the Stokes operator so that the sequence of the
$e_j(x)$ are uniformly bounded in $x$, see, e.g., \cite{Fe97}.\\
On the other hand, in Section 3.4 of \cite{FG}
the authors assume that $d=2$ and
 \begin{eqnarray}\label{eqn-FG-01}
\exists  g \in (0,\tfrac 12) &:&  \mbox{ the map } G:H\to \gamma(Y;H^{- g }) \\
&& \mbox{ is well defined, bounded and continuous.}
\nonumber
\end{eqnarray}
\delc{and that the mapping $H \ni u \mapsto J^{-g}G(u)\in \gamma(Y,H)$
is . }
With these assumptions on the noise, Flandoli and G\c{a}tarek proved, see (25) in \cite{FG} (but written here with
our notations and correcting  a misprint since
the exponent there need to have a $+$ instead of a $-$)
that for some $\eps>0$ and for all $m\ge 1$ there
exists a constant $C$ (dependent on $T$, $m$ and $\eps$ but
independent of $n$) such that
\[
\mathbb E_n \|z_n\|_{L^m(0,T;H^{\frac 12+\eps})}^m
\le C
\]
From this, using the
continuous embedding $H^{\frac 12+\eps}\subset H^{\eps,4}_\sol$ in the two
dimensional case,
they get the mean estimate in the $L^4(0,T;H^{\eps,4}_\sol)$-norm,
which is the basic tool in their proof.
However, one can obtain this latter estimate
as in our Lemma \ref{lemma-z2}, assuming only {\bf (G1)}-{\bf (G2)}
instead of \eqref{eqn-FG-01}. Let us repeat here again that the reason for us being able to weaken the assumptions from \cite{FG} is the use of It\^o integral with values in 2-smooth Banach spaces such as $L^4$.

Finally,
it seems to us that the argument of \cite{FG} to get existence of a martingale
solution would work also for $d=3$ assuming the following stronger version of the previously recalled assumption \eqref{eqn-FG-01},
\begin{eqnarray}\label{eqn-FG-02}
\exists  g \in (0,\tfrac 14) &:&  \mbox{ the map } G:H\to \gamma(Y;H^{- g }) \\
&& \mbox{ is well defined, bounded and continuous.}
\nonumber
\end{eqnarray}
\delc{\[
\exists  g \in (0,\tfrac 14): \; G:H\to \gamma(Y;H^{- g })
\]}
Indeed, for $d=3$ this gives the uniform estimates for the mean value of
$z_n$ in the norm $L^4(0,T;H^{\frac 34+\eps})$.
Since $H^{\frac 34+\eps}\subset H^{\eps,4}_\sol$ when $d=3$,
one gets also the estimates in the norm of
 $L^4(0,T;H^{\eps,4}_\sol)$.
\\
Obviously,  the last assumption \eqref{eqn-FG-02}
is stronger than our condition {\bf (G1)} also for $d=3$.
\end{remark}

\section{Pathwise uniqueness for $d=2$}\label{sec-u}
For the case $d=2$ we will investigate the  pathwise uniqueness.
This means that  given two  processes $v_1,v_2$ solving
\eqref{sns} on the same stochastic basis $(\Omega,\mathbb F,\mathbb P) $
with the same Wiener process $w$,  initial velocity $v_0$
and force $f$, we have
\begin{equation}\label{def-u-path}
\mathbb P\{v_1(t)=v_2(t) \; \text{for all } t \in [0,T]\}=1.
\end{equation}
We are going to prove  the following result.
\begin{theorem}\label{th:uniq}
Assume {\bf (G1)}-{\bf (G2)}-{\bf (G3)}. For $d=2$, if $v_0 \in H$ and $f \in L^p(0,T;H^{-1})$ for some $p>2$,
then
there is pathwise uniqueness for equation \eqref{sns}.
\end{theorem}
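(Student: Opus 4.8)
The plan is to follow the standard strategy for pathwise uniqueness of 2D Navier–Stokes: take two solutions $v_1,v_2$ on the same stochastic basis with the same $w$, $v_0$ and $f$, set $v=v_1-v_2$, and derive an equation for $v$. Since the noise is multiplicative, $v$ solves
\[
dv(t)+[Av(t)+B(v_1(t),v_1(t))-B(v_2(t),v_2(t))]\,dt=[G(v_1(t))-G(v_2(t))]\,dw(t),\qquad v(0)=0.
\]
The crucial point is that because the covariance of the noise is too rough to apply It\^o's formula for $\|v(t)\|_H^2$ directly, I would instead work at the level of the $H^{-g}$ norm, exploiting assumption \textbf{(G3)}, which gives the Lipschitz bound $\|G(v_1)-G(v_2)\|_{\gamma(Y;H^{-g})}\le L_g\|v_1-v_2\|_{H^{-g}}$, and Lemma \ref{stimaB-1-gamma}, which controls the bilinear term in $H^{-1-g}$. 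So the plan is to apply the It\^o formula to $\|J^{-g}v(t)\|_{L^2}^2=\|v(t)\|_{H^{-g}}^2$, noting that all the terms are meaningful: $J^{-g}v$ lives in $H$, $J^{-g}(G(v_1)-G(v_2))$ is Hilbert–Schmidt into $H$ by \textbf{(G3)}, and the drift terms can be paired against $J^{-g}\cdot J^{-g}v = J^{-2g}v$.

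Carrying this out, the It\^o formula gives
\[
\|v(t)\|_{H^{-g}}^2+2\int_0^t\|v(s)\|_{H^{1-g}}^2\,ds
=-2\int_0^t\langle B(v_1,v_1)-B(v_2,v_2),J^{-2g}v\rangle\,ds
+2\int_0^t\langle J^{-g}v,J^{-g}(G(v_1)-G(v_2))\,dw\rangle
+\int_0^t\|G(v_1)-G(v_2)\|_{\gamma(Y;H^{-g})}^2\,ds.
\]
The stochastic term is a local martingale (vanishing in expectation after localization), and by \textbf{(G3)} the It\^o correction term is bounded by $L_g^2\int_0^t\|v(s)\|_{H^{-g}}^2\,ds$. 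The drift term is the real work: writing $B(v_1,v_1)-B(v_2,v_2)=B(v,v_1)+B(v_2,v)$ and using the antisymmetry \eqref{scambio} where possible, I would estimate $|\langle B(v,v_1)+B(v_2,v),J^{-2g}v\rangle|$ using Lemma \ref{stimaB-1-gamma} with the roles chosen so that the factor $\|v\|_{H^{1-g}}$ (or $\|v\|_{H^{(1-g)/2}}$) can be absorbed into the dissipation $\|v\|_{H^{1-g}}^2$ on the left, at the cost of a factor of the form $\bigl(1+\|v_1(s)\|_{H^{(1-g)/2}}^\alpha+\|v_2(s)\|_{H^{(1-g)/2}}^\alpha\bigr)\|v(s)\|_{H^{-g}}^2$ after a Young inequality; here the regularity $v_1,v_2\in L^{4/(1-g)}(0,T;H^{(1-g)/2})$ from \eqref{reg-x-uni} is exactly what makes the coefficient integrable in time, since the exponent pairing in Lemma \ref{stimaB-1-gamma} splits $v$ into a $\|v\|_{H^{-g}}$ part and a $\|v\|_{H^{1-g}}$ part with the right powers. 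This produces a differential inequality
\[
\frac{d}{dt}\|v(t)\|_{H^{-g}}^2\le \rho(t)\,\|v(t)\|_{H^{-g}}^2
\]
with $\rho\in L^1(0,T)$ $\mathbb P$-a.s., after localizing by a stopping time. An application of the Gronwall lemma, using $v(0)=0$, then forces $v(t)=0$ in $H^{-g}$ for all $t$ up to the stopping time; since the solutions have a.s.\ finite $L^{4/(1-g)}(0,T;H^{(1-g)/2})$ and $C([0,T];H)$ norms (for $d=2$), the stopping times can be taken to exhaust $[0,T]$, and because $v_1,v_2\in C([0,T];H_{\mathrm w})$ the identity $v(t)=0$ in $H^{-g}$ upgrades to $v_1(t)=v_2(t)$ in $H$ for all $t\in[0,T]$, which is \eqref{def-u-path}.

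The main obstacle I anticipate is the careful bookkeeping of exponents in the trilinear estimate: one must verify that the two pieces $\langle B(v,v_1),J^{-2g}v\rangle$ and $\langle B(v_2,v),J^{-2g}v\rangle$ can both be dominated, via Lemma \ref{stimaB-1-gamma} (applied with $\phi=J^{-2g}v\in H^{1-g}\subset H^{1+g}$ only if $g\le 0$ — so in fact one should pair against $J^{-2g}v$ interpreted correctly, or rather re-derive the needed estimate at the $H^{-g}$ level), so that after Young's inequality the $\|v\|_{H^{1-g}}^2$-factor appears with a small enough constant to be absorbed by the $2\int_0^t\|v\|_{H^{1-g}}^2$ on the left, while the leftover time-dependent factor involves only $v_1,v_2$ to a power $\le 4/(1-g)$ in $H^{(1-g)/2}$. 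A secondary technical point is justifying the It\^o formula for $\|v\|_{H^{-g}}^2$ rigorously — this is where one uses that $J^{-g}v$ solves a well-posed equation in $H$ with Hilbert–Schmidt noise $J^{-g}(G(v_1)-G(v_2))$ (thanks to \textbf{(G3)}) and drift in $L^1(0,T;H^{-1-g})$ (thanks to Lemma \ref{stimaB-1-gamma} together with \eqref{reg-x-uni}), so that a standard It\^o formula in the Gelfand triple $H^{1-g}\subset H^{-g}\subset H^{-1-g}$ applies.
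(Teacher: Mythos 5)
Your proposal is correct and follows essentially the same route as the paper: the difference $V=v_1-v_2$ is estimated in the $H^{-g}$ norm via an It\^o formula made legitimate by \textbf{(G3)}, the bilinear term is handled by Lemma \ref{stimaB-1-gamma} (with exactly the pairing $\langle J^{-1-g}B,\,J^{1-g}V\rangle$ you anticipate as the fix for the duality issue) and Young's inequality, producing the coefficient $\|v_i\|_{H^{(1-g)/2}}^{4/(1-g)}$ integrable by \eqref{reg-x-uni}, followed by localization and a Gronwall-type argument. The only cosmetic difference is that the paper implements your ``Gronwall after localization'' step by applying It\^o to the weighted quantity $e^{-\int_0^t\psi(s)\,ds}\|V(t)\|^2_{H^{-g}}$ (so that the drift terms cancel and only the local martingale survives before taking expectations), which is the standard way to make the random integrating factor coexist with the martingale term.
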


\begin{proof}
Set $V=v_1-v_2$; this difference satisfies
\[
dV(t) +[AV(t) +B(v_1(t),v_1(t))-B(v_2(t),v_2(t))]\ dt=[G(v_1(t))-G(v_2(t))]\,dw(t)
\]
with $V(0)=0$; this equation  is equivalent to
\[
dV(t) +[AV(t) +B(V(t),v_1(t))+B(v_2(t),V(t))]\ dt=[G(v_1(t))-G(v_2(t))]\,dw(t).
\]

Following an idea from \cite{BM2015}, we will use the It\^o formula for $d\left(e^{-\int_0^t \psi(s)ds}\|V(t)\|^2_{H^{-g}}\right)$,
by choosing $\psi$ as done in \cite{S}:
\[
\psi(s)=1+L_g^2+2\overline C\left(\|v_1(s)\|_{H^{\frac {1-g}2}}^{\frac4{1-g}}
+\|v_2(s)\|_{H^{\frac {1-g}2}}^{\frac4{1-g}}\right)
\]
with $L_g$ the Lipschitz constant given in {\bf (G3)}
and $\overline C$ the constant appearing later on in \eqref{cxunic}.
We recall that $v_1, v_2 \in L^{\frac4{1-g}}(0,T;H^{\frac{1-g}2})$
$\mathbb P$-a.s., so $\psi\in L^1(0,T)$ $\mathbb P$-a.s.;
moreover, $V \in C([0,T];H)\subset C([0,T];H^{-g})$. \\
We have
\[
d \left(e^{-\int_0^t \psi(s)ds}\|V(t)\|^2_{H^{-g}}\right)=-  \psi(t) e^{-\int_0^t \psi(s)ds}\|V(t)\|^2_{H^{-g}} dt +
e^{-\int_0^t \psi(s)ds}d\|V(t)\|^2_{H^{-g}}
\]
and the latter differential is well defined and given by
\[\begin{split}
\frac 12 d\|V(t)\|^2_{H^{-g}}&=-\|\nabla V(t)\|_{H^{-g}}^2dt
-\langle J^{-g}[B(V(t),v_1(t))+B(v_2(t),V(t))],J^{-g}V(t)\rangle dt
\\
&+\langle J^{-g}[G(v_1(t))-G(v_2(t))]\,dw(t), J^{-g}V(t)\rangle
+\frac 12 \|G(v_1(t))-G(v_2(t))\|_{\gamma(Y;H^{-g})}^2 dt
\end{split}
\]

By means of Lemma \ref{stimaB-1-gamma} and Young inequality we get that
there exists a constant $\overline C$ such that
\begin{equation}\label{cxunic}\begin{split}
|\langle J^{-g}B(V,v_1),J^{-g}V\rangle|
&=\langle J^{-1-g}B(V,v_1),J^{1-g}V\rangle
\\
&\le \|B(V,v_1)\|_{H^{-1-g}}\|V\|_{H^{1-g}}
\\
& \le C  \|V\|^{\frac {1-g}2}_{H^{-g}} \|V\|^{\frac {1+g}2}_{H^{1-g}}
   \|v_1\|_{H^{\frac {1-g}2}} \|V\|_{H^{1-g}}
\\
&=
C  \|V\|^{\frac {1-g}2}_{H^{-g}} \|V\|^{\frac {3+g}2}_{H^{1-g}}
   \|v_1\|_{H^{\frac {1-g}2}}
\\
&\le \frac 14  \|V\|_{H^{1-g}}^2
 +\overline C \|v_1\|_{H^{\frac {1-g}2}}^{\frac4{1-g}} \|V\|^2_{H^{-g}}
 \\
&= \frac 14  \|\nabla V\|_{H^{-g}}^2+\frac 14  \|V\|_{H^{-g}}^2
 +\overline C \|v_1\|_{H^{\frac {1-g}2}}^{\frac4{1-g}} \|V\|^2_{H^{-g}}
\end{split}
\end{equation}
where we used that
$\|V(t)\|^2_{H^{1-g}}=\|\nabla V(t)\|^2_{H^{-g}}+\|V(t)\|^2_{H^{-g}}$
, see \eqref{quadrati}).
Similarly we get the estimate for
$\langle J^{-g}B(v_2,V),J^{-g}V\rangle$.

Therefore, using {\bf (G3)}   we get
\begin{multline}\label{stima:Sch}
d \left(e^{-\int_0^t \psi(s)ds}\|V(t)\|^2_{H^{-g}}\right)
\\\le
e^{-\int_0^t \psi(s)ds}\langle J^{-g}[G(v_1(t))-G(v_2(t))]\,dw(t), J^{-g}V(t)\rangle.
\end{multline}
The r.h.s. is  a local martingale; indeed if we define the stopping time
\[
\tau_N=T\wedge \inf\{t \in [0,T]:\|V(t)\|_{H^{-g}}>N\}
\]
and
\[
M_N(t)=\int_0^{t\wedge \tau_N} e^{-\int_0^r \psi(s)ds}
\langle J^{-g}V(r), J^{-g}[G(v_1(r))-G(v_2(r))]\,dw(r)\rangle
\]
then
\begin{align*}
\mathbb E M_N(t)^2&\le \mathbb E \int_0^{t\wedge \tau_N}
  e^{-2\int_0^r \psi(s)ds}\|V(r)\|_{H^{-g}}^2\|G(v_1(r))-G(v_2(r))\|^2_{\gamma(Y;H^{-g})}dr\\
&\le L_g^2 \mathbb E \int_0^{t\wedge \tau_N}\|V(r)\|_{H^{-g}}^4 dr\\
&\le L_g^2 N^4 t .
\end{align*}
Hence, $M_N$ is a square integable martingale; in particular $\mathbb E M_N(t)=0$ for any $t$.

Therefore, by integrating  \eqref{stima:Sch} over  $[0,t\wedge\tau_N]$
and taking  the expectation we get
\[
\mathbb E e^{-\int_0^{t\wedge \tau_N} \psi(s)ds}\|V(t\wedge \tau_N)\|^2_{H^{-g}}
\le 0.
\]
So
\[
e^{-\int_0^{t\wedge \tau_N} \psi(s)ds}\|V(t\wedge \tau_N)\|^2_{H^{-g}}=0 \qquad \mathbb P-a.s.
\]
Since $\displaystyle\lim_{N\to \infty} \tau_N=T\ \mathbb P$-a.s., we get in the limit that for any $t\in [0,T]$
\[
e^{-\int_0^{t} \psi(s)ds}\|V(t)\|^2_{H^{-g}}=0 \qquad \mathbb P-a.s.
\]
Thus, if we take a  sequence $\{t_k\}_{k=1}^\infty$ which is dense in $[0,T]$ we have
\[
\mathbb P\{\|V(t_k)\|_{H^{-g}}=0 \; \text{for all } k \in \mathbb N\}=1.
\]
Since each path of the process $V$ belongs to $C([0,T]; H^{-g})$, we get \eqref{def-u-path}.
\end{proof}

\medskip
We recall that a strong solution of \eqref{sns} is
a progressively measurable process $v:[0,T]\times \hat \Omega\to H$
fulfilling \eqref{sol-path}
for any given stochastic basis $(\hat\Omega,\hat{\mathbb F},\hat{\mathbb P})$
and  any $Y$-cylindrical Wiener process $\hat w$.

Pathwise uniqueness and existence of martingale solutions imply existence of
stong solution, see, e.g. Ikeda Watanabe monograph \cite{IW}.
Therefore,
Theorems \ref{th:esist} and \ref{th:uniq} imply
\begin{theorem}
Let $d=2$ and $v_0 \in H$, $f \in L^p(0,T;H^{-1})$ for some $p>2$.
If all three assumptions {\bf (G1)}-{\bf (G2)}-{\bf (G3)} are satisfied,
then   there exists a unique strong solution  of \eqref{sns}.
\end{theorem}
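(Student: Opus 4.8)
The plan is to combine the two main results already obtained, namely existence of a martingale solution and pathwise uniqueness, via the Yamada--Watanabe principle.

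First I would invoke Theorem~\ref{th:esist} with $d=2$: under {\bf (G1)}--{\bf (G2)} it provides a martingale solution $\left((\tilde\Omega,\tilde{\mathbb F},\tilde{\mathbb P}),\tilde w,\tilde v\right)$ of \eqref{sns} whose paths, by the last assertion of that theorem, lie in $C([0,T];H)$ $\tilde{\mathbb P}$-a.s., and additionally in $L^{\frac4{1-g}}(0,T;H^{\frac{1-g}2})\cap L^{\frac8d}(0,T;L^4)$ by \eqref{reg-x-uni}. In particular the solution takes values in a Polish path space, so the functional-analytic hypotheses needed to run a Yamada--Watanabe type argument are met. Next I would invoke Theorem~\ref{th:uniq}: under {\bf (G1)}--{\bf (G2)}--{\bf (G3)} and $d=2$, pathwise uniqueness in the sense of \eqref{def-u-path} holds for \eqref{sns}.

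Then I would apply the Yamada--Watanabe theorem in the form valid for stochastic evolution equations driven by a cylindrical Wiener process (see, e.g., the Ikeda--Watanabe monograph \cite{IW}, or Kurtz's version of the principle): existence of a martingale solution together with pathwise uniqueness implies that, on the prescribed stochastic basis $(\Omega,\mathbb F,\mathbb P)$ carrying the given $Y$-cylindrical Wiener process $w$, there exists a progressively measurable process $v$ with paths in $C([0,T];H)$ satisfying \eqref{sol-path} $\mathbb P$-a.s., i.e. a strong solution in the sense recalled just above the statement. Uniqueness of this strong solution is then immediate from the pathwise uniqueness \eqref{def-u-path}.

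There is essentially no further obstacle: the substance of the argument is already contained in Theorems~\ref{th:esist} and \ref{th:uniq}. The only point requiring a little care is to check that the version of the Yamada--Watanabe theorem being quoted applies in the present infinite-dimensional, cylindrical-noise setting; since for $d=2$ the solution is genuinely continuous with values in $H$, the path space $C([0,T];H)$ is Polish and the classical statement applies directly, with no need for a refinement to non-metrizable spaces.
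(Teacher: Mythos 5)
Your proposal is correct and follows exactly the route the paper takes: the paper's own proof of this theorem is precisely the observation that pathwise uniqueness (Theorem~\ref{th:uniq}) together with existence of a martingale solution (Theorem~\ref{th:esist}) yields a unique strong solution via the Yamada--Watanabe principle, with the same reference to the Ikeda--Watanabe monograph \cite{IW}. Your additional remark that the $d=2$ solution lives in the Polish space $C([0,T];H)$, so the classical form of the principle applies, is a sensible clarification but not a departure from the paper's argument.
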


\section{Appendix}
\subsection{$\gamma$-radonifying operators}
We refer to \cite{BvN}, \cite{vN} for the definition and main
properties of $\gamma$-radonifying operators.

Let $Y$ be a real separable Hilbert space and $E$ a real separable Banach space;
let $\gamma_Y$ be the standard cylindrical Gaussian measure of $Y$.
We denote by $\mathcal L(Y;E)$ the space of linear bounded operators from
$Y$ to $E$, and by $\gamma(Y;E)$ the space of $\gamma$-radonifying
 operators from $Y$ to $E$. $T\in \gamma(Y;E)$ means that  $T\in\mathcal L(Y;E)$
and  $T(\gamma_Y)$ extends to a Gaussian measure on $E$.

We recall the following well-known facts:
\begin{itemize}
\item  If $T : Y \to  E$ is $\gamma$-radonifying and $S : E \to F$ is
bounded, then also $S \circ T : Y \to F$ is $\gamma$-radonifying;
\item If $T : Y_1 \to E$ is $\gamma$-radonifying and $S : Y_0 \to Y_1$
is bounded, then $T \circ S : Y_0 \to E$ is $\gamma$-radonifying;
\item
If $E$ is a Hilbert space, then $T : Y \to E$ is $\gamma$-radonifying
if and only if $T$ is Hilbert-Schmidt.
\end{itemize}

We have the following characterization of $\gamma$-radonifying operators
when $E=L^p(\mathbb R^d)$, see Proposition 13.7 in \cite{vN} and Theorem 2.3 in  \cite{BvN}.
\begin{proposition}
Let $1\le p < \infty$ and $\{e_j\}_{j=1}^\infty$
a complete orthonormal system in $Y$.
For an operator $T \in \mathcal L(Y,L^p(\mathbb R^d))$ the following
assertions are equivalent:
\begin{enumerate}
\item $T \in \gamma(Y,L^p(\mathbb R^d))$;
\item
$(\sum_{j=1}^\infty |T e_j|^2)^{\frac 12} \in L^p(\mathbb R^d)$.
\end{enumerate}
Moreover the norms $\|T\|_{\gamma(Y,L^p(\mathbb R^d))}$ and
$\|(\sum_j |T e_i|^2)^{\frac 12}\|_{L^p(\mathbb R^d)}$ are equivalent.
\end{proposition}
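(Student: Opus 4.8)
The plan is to reduce the statement to the classical realization of the $\gamma$-radonifying norm as a square function, exploiting the pointwise (lattice) structure of $L^p(\mathbb R^d)$. I use the standard equivalent description of the $\gamma$-radonifying property: $T\in\gamma(Y,L^p(\mathbb R^d))$ precisely when the Gaussian series $\sum_{j=1}^\infty \xi_j\,Te_j$ converges in $L^2(\Omega;L^p(\mathbb R^d))$, where $(\xi_j)_{j\ge 1}$ are independent standard real Gaussian variables on an auxiliary probability space $(\Omega,\mathbb P)$; in that case $\|T\|_{\gamma(Y,L^p)}^2=\mathbb E\big\|\sum_j \xi_j Te_j\big\|_{L^p}^2$. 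The strategy is to prove the norm identity for finite partial sums, where no convergence issue arises, and then pass to the limit.

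First I would fix $N$ and set $\xi^{(N)}=\sum_{j=1}^N \xi_j Te_j\in L^p(\mathbb R^d)$. By the Kahane--Khintchine inequality for Gaussian sums in a Banach space, the second and the $p$-th moments of $\|\xi^{(N)}\|_{L^p}$ are comparable with constants depending only on $p$:
\[
\mathbb E\|\xi^{(N)}\|_{L^p}^2 \simeq_p \left(\mathbb E\|\xi^{(N)}\|_{L^p}^p\right)^{2/p}.
\]
The reason for passing to the exponent $p$ is that it matches the integrability exponent of the target space, so Tonelli's theorem applies: writing $\|\xi^{(N)}\|_{L^p}^p=\int_{\mathbb R^d}|\xi^{(N)}(x)|^p\,dx$ and interchanging $\mathbb E$ with $\int_{\mathbb R^d}$ gives
\[
\mathbb E\|\xi^{(N)}\|_{L^p}^p=\int_{\mathbb R^d}\mathbb E\Big|\sum_{j=1}^N \xi_j (Te_j)(x)\Big|^p\,dx.
\]
For each fixed $x$ the inner sum is a centred scalar Gaussian of variance $\sigma_N(x)^2:=\sum_{j=1}^N |(Te_j)(x)|^2$, hence its $p$-th absolute moment equals $c_p\,\sigma_N(x)^p$ with $c_p=\mathbb E|\xi_1|^p$. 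Substituting yields the exact finite-sum identity
\[
\mathbb E\|\xi^{(N)}\|_{L^p}^p=c_p\int_{\mathbb R^d}\Big(\sum_{j=1}^N |(Te_j)(x)|^2\Big)^{p/2}dx
=c_p\,\Big\|\big(\textstyle\sum_{j=1}^N |Te_j|^2\big)^{1/2}\Big\|_{L^p}^p,
\]
so that, combined with Kahane--Khintchine, $\mathbb E\|\xi^{(N)}\|_{L^p}^2\simeq_p \big\|(\sum_{j\le N}|Te_j|^2)^{1/2}\big\|_{L^p}^2$ with constants independent of $N$ and $T$.

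Finally I would let $N\to\infty$. The partial square functions $s_N:=(\sum_{j\le N}|Te_j|^2)^{1/2}$ increase pointwise to $s:=(\sum_{j}|Te_j|^2)^{1/2}$, so by monotone convergence $\|s_N\|_{L^p}\uparrow\|s\|_{L^p}$ in $[0,\infty]$. The two-sided bound above then shows that $\mathbb E\|\xi^{(N)}\|_{L^p}^2$ stays bounded as $N\to\infty$ if and only if $s\in L^p(\mathbb R^d)$, which is condition (2). To upgrade boundedness to convergence of the Gaussian series, i.e.\ to $T\in\gamma(Y,L^p)$, I would apply the finite-sum equivalence to the increments $\xi^{(M)}-\xi^{(N)}=\sum_{N<j\le M}\xi_j Te_j$, whose associated square function is $(\sum_{N<j\le M}|Te_j|^2)^{1/2}$; a Cauchy condition in one norm is then equivalent to a Cauchy condition in the other. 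Passing to the limit in the norm identity gives $\|T\|_{\gamma(Y,L^p)}^2\simeq_p\|s\|_{L^p}^2$, the asserted equivalence. The only delicate points are the use of Fubini--Tonelli (handled cleanly at the finite-sum level, where the integrands are genuinely integrable) and the equivalence between convergence of the Gaussian series and membership of $s$ in $L^p$; both become routine once the finite-sum identity is in hand, so I expect no essential obstacle, the main care being the bookkeeping of the Kahane--Khintchine constants.
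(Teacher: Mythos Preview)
Your argument is correct and is exactly the standard route to this result: Kahane--Khintchine to pass from the $L^2(\Omega)$-norm defining $\|T\|_{\gamma(Y,L^p)}$ to the $L^p(\Omega)$-norm, Fubini to swap the $\mathbb R^d$ and $\Omega$ integrals, the explicit $p$-th moment of a scalar Gaussian to produce the square function, and finally monotone convergence together with the Cauchy criterion to handle the limit $N\to\infty$. There is no gap.

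As for comparison with the paper: the paper does \emph{not} give a proof of this proposition. It is stated in the Appendix with the attribution ``see Proposition 13.7 in \cite{vN} and Theorem 2.3 in \cite{BvN}'' and used as a black box. Your proof is precisely the one found in those references (and in any standard treatment of $\gamma$-radonifying operators with values in $L^p$), so there is nothing to contrast at the level of method.
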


\bigskip\bigskip
For the stochastic integral
\[
X(t)=\int_0^t \phi(s)dW(s)
\]
of a progressively measurable process
$\phi: [0,T]\times \Omega \to \gamma(Y;E)$
with respect to a $Y$-cylyndrical Wiener process $W$,
we have the following  result, see e.g. \cite{Brz_1997}.
\begin{proposition}\label{conv-stoc}
Let  $W$ be a $Y$-cylindrical Wiener process. \\
If for some $m \ge 2$ we have $\mathbb E[(\int_0^T \|\phi(t)\|_{\gamma(Y;E)}^2dt)^{m/2}]<\infty$,
then $X$ has a
 progressively measurable $E$-valued version and
\[
\mathbb E \|X(t)\|_E^m\le C_m \mathbb E \left[
\left(\int_0^t \|\phi(s)\|_{\gamma(Y;E)}^2ds\right)^{\frac m2} \right].
\]
\end{proposition}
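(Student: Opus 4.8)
The plan is to prove the estimate first for elementary integrands, where the stochastic integral collapses to a finite sum of martingale differences, and then to reach general progressively measurable $\phi$ by approximation. The enabling structural hypothesis, implicit in the setting of \cite{Brz_1997} and satisfied by the spaces $L^4$ and $H^{\varepsilon,4}_\sol$ used in the paper, is that $E$ is a \emph{2-smooth} (martingale type 2) Banach space: there is a constant $C$ with $\mathbb E\|\sum_k d_k\|_E^2\le C\sum_k\mathbb E\|d_k\|_E^2$ for every $E$-valued martingale difference sequence $(d_k)$. Without this geometric property the quadratic bound fails, so it is the true engine of the result.

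First I would take a simple integrand $\phi(t)=\sum_k \phi_k\,\mathbf 1_{(t_k,t_{k+1}]}(t)$ with each $\phi_k\in\gamma(Y;E)$ being $\mathbb F_{t_k}$-measurable. Then $X(t)$ is, up to the interval containing $t$, the discrete martingale $\sum_k \phi_k\bigl(W(t_{k+1})-W(t_k)\bigr)$, whose differences $d_k=\phi_k\bigl(W(t_{k+1})-W(t_k)\bigr)$ satisfy $\mathbb E[d_k\mid\mathbb F_{t_k}]=0$. By the defining property of the $\gamma$-radonifying norm applied to the Gaussian increment $W(t_{k+1})-W(t_k)=\sum_j(\beta_j(t_{k+1})-\beta_j(t_k))e_j$, one has
\[
\mathbb E\|d_k\|_E^2=(t_{k+1}-t_k)\,\mathbb E\|\phi_k\|_{\gamma(Y;E)}^2 .
\]
Invoking the martingale type 2 inequality then yields the case $m=2$:
\[
\mathbb E\|X(t)\|_E^2\le C\sum_k\mathbb E\|d_k\|_E^2
= C\,\mathbb E\int_0^t\|\phi(s)\|_{\gamma(Y;E)}^2\,ds .
\]

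Next I would pass to arbitrary $m\ge 2$ by a Burkholder--Davis--Gundy argument adapted to the 2-smooth geometry. After renormalizing so that $x\mapsto\|x\|_E^2$ is twice Fr\'echet differentiable with uniformly bounded second derivative, an application of It\^o's formula to $\|X(t)\|_E^2$ produces a real-valued local martingale plus a bounded-variation correction dominated by $\int_0^t\|\phi(s)\|_{\gamma(Y;E)}^2\,ds$; the scalar BDG inequality then upgrades this to the $m$-th moment bound, in fact to the stronger statement with $\mathbb E\sup_{s\le t}\|X(s)\|_E^m$ on the left. Finally I would drop the simplicity assumption: for general progressively measurable $\phi$ with $\mathbb E\bigl(\int_0^T\|\phi\|_{\gamma(Y;E)}^2\,dt\bigr)^{m/2}<\infty$, choose simple $\phi^{(n)}\to\phi$ in $L^m(\Omega;L^2(0,T;\gamma(Y;E)))$; applying the estimate to the differences $X^{(n)}-X^{(\ell)}$ shows $(X^{(n)})$ is Cauchy in $L^m(\Omega;C([0,T];E))$, its limit defines $X$, which inherits a progressively measurable continuous version from the approximants and satisfies the bound by passage to the limit. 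The main obstacle is Step 2, the extension to higher moments: it rests entirely on the martingale type 2 property together with the $C^2$-smoothness of the renormalized norm, precisely the features that distinguish 2-smooth spaces and that are unavailable in a general Banach space.
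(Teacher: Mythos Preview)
The paper does not prove this proposition; it is simply quoted from \cite{Brz_1997} as a known result, so there is no in-paper argument to compare against. Your sketch is essentially the route taken in that reference and the surrounding literature (Neidhardt, Dettweiler, Pisier): reduce to elementary integrands, invoke the martingale type~2 inequality for the second moment, upgrade to $m\ge 2$ via a Burkholder--Davis--Gundy type argument that exploits the 2-smooth geometry, and close by density in $L^m(\Omega;L^2(0,T;\gamma(Y;E)))$.

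One technical refinement worth flagging: what 2-uniform smoothness gives, after Pisier's renorming, is that $\|\cdot\|_E^2$ is $C^{1,1}$ (Fr\'echet differentiable with Lipschitz derivative), not literally $C^2$ with bounded second Fr\'echet derivative. This weaker regularity is still sufficient for the It\^o-type expansion you describe, but if you write the argument out in full you will need the $C^{1,1}$ version of the formula rather than the classical $C^2$ one. Alternatively, one can bypass the It\^o formula entirely and obtain the higher-moment bound from the discrete martingale inequalities in M-type~2 spaces (Assouad, Pisier) combined with a good-$\lambda$ or Lenglart argument; both routes appear in the literature and lead to the same constant-dependent bound.
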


\subsection{Compactness lemmas}
In \cite{MR2005,BM2013} there are some useful compactness results.
We follow \cite{BM2013}.

Given $1<p<\infty$, we denote by $L^p_{\mathrm{w}}(0,T;L^4)$ the space $L^p(0,T;L^4)$
with the weak topology.

For any $R>0$ let $B_R=\{x \in \mathbb R^d:|x|<R\}$. Given
$v: \mathbb R^d\to \mathbb R^d$,
we denote by
$v|_{B_R}$ its restriction to the ball  $B_R$.
We define
\[
H_R=\{v|_{B_R}: v \in H\},\qquad
H_R^\delta=\{v|_{B_R}: v \in H^\delta\}
\]
with
\[
\|v\|_{H_R}=\left( \int_{B_R} |v(x)|^2 dx \right)^{1/2},
\qquad
\|v\|_{H^\delta_R}=\left( \int_{B_R} |(\Lambda^\delta v)(x)|^2 dx \right)^{1/2}.
\]
For $\delta>0$ the space $H^\delta_R$ is compactly embedded in $H_R$, since
the space variable belongs to a bounded set.

We denote by $L^2(0,T;H_{\mathrm{loc}})$ the space of measurable functions
$v:[0,T]\to H$ such that  for any $R>0$ the norm
\[
\|v\|_{L^2(0,T;H_R)}=\left(\int_0^T \int_{B_R} |v(t,x)|^2 dx \ dt \right)^{1/2}
\]
is finite.
It is a Fr\'echet space with the topology
generated by the seminorms $\|v\|_{L^2(0,T;H_R)}$, $R\in \mathbb N$.

\begin{lemma}
Let
\[
\tilde Z=L^{p}_{\mathrm{w}}(0,T;L^4)\cap C([0,T];U^\prime) \cap L^2(0,T;H_{\mathrm{loc}})
\]
for some $p \in (1,\infty)$,
and let $\tilde {\mathcal T}$ be the supremum of the corresponding topologies.
\\
Then a set $\tilde K\subset \tilde Z$ is $\tilde {\mathcal T}$-relatively
compact if the following conditions hold:

(i) $\displaystyle\sup_{v \in \tilde K} \|v\|_{L^{p}(0,T;L^4)}<\infty$

(ii) $\displaystyle\exists \gamma>0: \sup_{v \in \tilde K}\|v\|_{C^\gamma([0,T];H^{-1})}<\infty$

(iii) $\displaystyle\exists \delta>0: \sup_{v \in \tilde K} \|v\|_{L^2(0,T;H^\delta)}<\infty$.
\end{lemma}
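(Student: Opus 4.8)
The plan is to establish the three compactness criteria (i)--(iii) by combining two classical ingredients: the Aubin--Lions--Simon compactness lemma adapted to the local spaces $H_{\mathrm{loc}}$, and the Arzel\`a--Ascoli theorem for H\"older continuous paths valued in a space into which $H^{-1}$ embeds compactly on bounded balls. Throughout I fix the set $\tilde K\subset\tilde Z$ satisfying (i), (ii), (iii) with constants, say, $A_1$ for the $L^p(0,T;L^4)$-bound, $A_2$ for the $C^\gamma([0,T];H^{-1})$-bound, and $A_3$ for the $L^2(0,T;H^\delta)$-bound.

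First I would treat the factor $L^p_{\mathrm w}(0,T;L^4)$. A bounded set of the reflexive space $L^p(0,T;L^4)$ (here $1<p<\infty$ and $L^4$ reflexive) is relatively weakly sequentially compact by the Banach--Alaoglu and Eberlein--\v{S}mulian theorems, so from any sequence in $\tilde K$ one extracts a subsequence converging weakly in $L^p(0,T;L^4)$; this handles the first component. Next I would treat $C([0,T];U^\prime)$: recall from Section~\ref{sec-mf} that $H^{-1}$ is compactly embedded in $U^\prime$. Condition (ii) gives uniform $C^\gamma$-bounds in $H^{-1}$, hence equicontinuity (with a common modulus $\omega(h)=A_2 h^\gamma$) in $H^{-1}$ and a fortiori in $U^\prime$; moreover for each fixed $t$ the set $\{v(t):v\in\tilde K\}$ is bounded in $H^{-1}$, hence relatively compact in $U^\prime$. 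Arzel\`a--Ascoli then yields relative compactness in $C([0,T];U^\prime)$.

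The main work is the third component, $L^2(0,T;H_{\mathrm{loc}})$, for which I would invoke an Aubin--Lions type argument. Fix $R\in\mathbb N$. On the ball $B_R$ we have the compact embedding $H^\delta_R\hookrightarrow H_R$ (noted in the excerpt, since the spatial domain is now bounded) and the continuous embedding $H_R\hookrightarrow H^{-1}_R$, where the latter is, say, the restriction to $B_R$ of $H^{-1}$; so the triple $H^\delta_R\hookrightarrow\!\!\hookrightarrow H_R\hookrightarrow H^{-1}_R$ is of Aubin--Lions form. Condition (iii) bounds $\tilde K$ in $L^2(0,T;H^\delta_R)$, and condition (ii) bounds it in $C^\gamma([0,T];H^{-1}_R)\subset W^{\gamma,2}(0,T;H^{-1}_R)$ (or simply gives uniform equicontinuity in the weaker space), which is exactly the time-regularity hypothesis needed. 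The Aubin--Lions--Simon theorem then gives relative compactness of $\tilde K$ in $L^2(0,T;H_R)$ for each $R$. Since the topology of $L^2(0,T;H_{\mathrm{loc}})$ is the Fr\'echet topology generated by the seminorms $\|\cdot\|_{L^2(0,T;H_R)}$, $R\in\mathbb N$, a diagonal extraction over $R$ produces, from any sequence in $\tilde K$, a subsequence Cauchy in every seminorm, hence convergent in $L^2(0,T;H_{\mathrm{loc}})$.

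Finally I would assemble the pieces. Given any sequence $(v_n)\subset\tilde K$, successively extract subsequences so that $v_n$ converges weakly in $L^p(0,T;L^4)$, strongly in $C([0,T];U^\prime)$, and strongly in $L^2(0,T;H_{\mathrm{loc}})$; the limits coincide (e.g.\ by testing against compactly supported smooth functions), giving a single limit $v\in\tilde Z$ to which $v_n\to v$ in the supremum topology $\tilde{\mathcal T}$. Hence $\tilde K$ is $\tilde{\mathcal T}$-relatively sequentially compact, which in this setting is the assertion. The point requiring the most care is the Aubin--Lions step: one must check that condition (ii), stated as a uniform H\"older bound in $H^{-1}$, indeed furnishes the equi-integrability-of-time-translates hypothesis of the Simon version of the lemma on each $B_R$ --- this is where the precise formulation of the compactness theorem one cites matters, and it is the natural place to point to the corresponding lemma in \cite{BM2013}.
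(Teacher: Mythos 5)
Your argument is correct, and for the first two components it coincides with the paper's proof: Banach--Alaoglu for boundedness in $L^{p}_{\mathrm{w}}(0,T;L^4)$, and Arzel\`a--Ascoli combined with the compact embedding of $H^{-1}$ into $U^\prime$ for $C([0,T];U^\prime)$. Where you genuinely diverge is the $L^2(0,T;H_{\mathrm{loc}})$ component. You invoke the Aubin--Lions--Simon theorem on each ball $B_R$ with the triple $H^\delta_R$ (compactly) $\subset H_R \subset H^{-1}_R$, using (ii) as the time-regularity hypothesis, and then diagonalize over $R\in\mathbb N$. The paper avoids Aubin--Lions altogether: having already extracted a subsequence convergent in $C([0,T];U^\prime)$, it applies the Lions (Ehrling-type) interpolation inequality $\|v\|^2_{H_R}\le \varepsilon\|v\|^2_{H^\delta_R}+C_{\varepsilon,R}\|v\|^2_{U^\prime}$ for the chain $H^\delta_R\subset H_R\subset U^\prime$ (first embedding compact, second continuous), so that the uniform $L^2(0,T;H^\delta)$ bound (iii) together with uniform convergence in $U^\prime$ immediately forces convergence in each $L^2(0,T;H_R)$. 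The paper's route is lighter --- it recycles the Ascoli step and needs only an elementary interpolation inequality --- whereas yours is more modular but must verify Simon's translate condition from the $C^\gamma$ bound on each ball. That verification does go through, though one small correction: $C^\gamma([0,T];Y)$ embeds into $W^{s,2}(0,T;Y)$ only for $s<\gamma$, so the inclusion into $W^{\gamma,2}$ you write in passing fails at the endpoint; your fallback formulation via uniform equicontinuity of time translates is the correct one to use. Both arguments are sound and yield the stated relative compactness in the supremum topology.
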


\begin{proof}
We can assume that $\tilde K$ is closed in $\tilde {\mathcal T}$.

It is trivial from (i) that the
set $\tilde K$ is compact in $L^{p}_{\mathrm{w}}(0,T;L^4)$; this comes from the
Banach-Alaoglu theorem.
Moreover (ii) implies that
the functions $v\in\tilde K$ are equicontinuous, i.e.
\[
\forall \varepsilon>0 \; \exists \delta >0 :
 |t-s|<\delta \Longrightarrow
\|v(t)-v(s)\|_{H^{-1}} \le \varepsilon \qquad
\forall v \in \tilde K.
\]
Since $H^{-1}$ is compactly embedded in $U^\prime$, Ascoli-Arzel\`a theorem
provides that $\tilde K$ is compactly embedded in  $C([0,T];U^\prime)$.

Notice that the compactness of a subset of $\tilde Z$ is equivalent
to its sequential compactness. Therefore,  if we take a sequence
$\{v_i\}_{i\in \mathbb N}\subset \tilde K$ then there exists a subsequence
converging to some $v \in  \tilde K$ in the two previous topologies.
What remains to prove is
that this subsequence  (or possibly
 a subsubsequence) is convergent in $L^2(0,T;H_{\mathrm{loc}})$.
\\
Let us fix $R>0$; the embedding $H^\delta_R\subset H_R$ is compact and
$H_R\simeq H^\prime_R \subset H^\prime\subset U^\prime$ with  continuous
embeddings. Hence by a Lions
Lemma, see \cite{l},
for every $\varepsilon >0$ there exists a constant $C=C_{\varepsilon,R}$
such that
\[
\|v\|^2_{H_R}\le \varepsilon \|v\|^2_{H^\delta_R}+C\|v\|^2_{U^\prime}.
\]
Thus for almost all $s \in [0,T]$
\[
\|v_n(s)-v(s)\|^2_{H_R}\le \varepsilon \|v_n(s)-v(s)\|^2_{H^\delta_R}
   +C\|v_n(s)-v(s)\|^2_{U^\prime}
\]
and therefore, setting $M=\sup_{v \in K}  \|v\|^2_{L^2(0,T;H^\delta)}$ we get
\begin{align*}
\|v_n-v\|^2_{L^2(0,T;H_R)}&\le \varepsilon \|v_n-v\|^2_{L^2(0,T;H^\delta_R)}
   +C\|v_n-v\|^2_{L^2(0,T;U^\prime)}\\
&\le
\varepsilon 2M
   +C T \|v_n-v\|^2_{C([0,T];U^\prime)}
\end{align*}
Now we pass to the upper limit (in a subsequence)  and get
\[
\limsup_{j \to \infty} \|v_{n_j}-v\|^2_{L^2(0,T;H_R)}\le 2M \varepsilon ;
\]
since $\varepsilon$ is arbitrary,  we get the convergence in $L^2(0,T;H_{\mathrm{loc}})$.

\end{proof}

If we proceed as in Lemma 3.3 in \cite{BM2013} or Lemma 2.7 in \cite{MR2005},
we get also the following result.
\begin{lemma}
Let
\[
Z=L^{p}_{\mathrm{w}}(0,T;L^4)\cap C([0,T];U^\prime) \cap L^2(0,T;H_{\mathrm{loc}})\cap C([0,T];H_{\mathrm{w}})
\]
for some $p \in (1,\infty)$,
and let ${\mathcal T}$ be the supremum of the corresponding topologies.
\\
Then a set $K\subset Z$ is $\mathcal T$-relatively
compact if the following conditions hold:

(i) $\displaystyle\sup_{v \in K} \|v\|_{L^{p}(0,T;L^4)}<\infty$

(ii) $\displaystyle\exists \gamma>0: \sup_{v \in K}\|v\|_{C^\gamma([0,T];H^{-1})}<\infty$

(iii) $\displaystyle\exists \delta>0: \sup_{v \in K} \|v\|_{L^2(0,T;H^\delta)}<\infty$

(iv) $\displaystyle \sup_{v \in K} \|v\|_{L^\infty(0,T;H)}<\infty$.
\end{lemma}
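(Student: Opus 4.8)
The plan is to reduce the assertion to the previous lemma, which already establishes relative compactness in the smaller product space $\tilde Z=L^p_{\mathrm{w}}(0,T;L^4)\cap C([0,T];U^\prime)\cap L^2(0,T;H_{\mathrm{loc}})$ from conditions (i)--(iii) alone, and then to use the extra uniform bound (iv) to handle the additional factor $C([0,T];H_{\mathrm{w}})$. Since relative compactness in $Z$ is equivalent to sequential compactness, it suffices to show that every sequence $\{v_n\}\subset K$ has a subsequence converging in the topology $\mathcal T$.

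First I would apply the previous lemma: conditions (i)--(iii) imply that $K$ is relatively compact in $\tilde Z$, so from $\{v_n\}$ one can extract a subsequence, still denoted $\{v_n\}$, converging to some $v$ in $\tilde{\mathcal T}$; in particular $v_n\to v$ in $C([0,T];U^\prime)$. It then remains only to prove that $v\in C([0,T];H_{\mathrm{w}})$ and that $v_n\to v$ in $C([0,T];H_{\mathrm{w}})$ as well.

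Next I would identify the limit and upgrade the mode of convergence using (iv). Set $M:=\sup_{v\in K}\|v\|_{L^\infty(0,T;H)}$. For fixed $t$ the family $\{v_n(t)\}$ is bounded in $H$ by $M$; since $v_n(t)\to v(t)$ in $U^\prime$ and $H$ embeds continuously, hence weak-to-weak continuously, into $U^\prime$, every weak-$H$ limit point of $\{v_n(t)\}$ must coincide with $v(t)$. Consequently $v(t)\in H$ with $\|v(t)\|_H\le M$, one has $v_n(t)\rightharpoonup v(t)$ weakly in $H$, and running the same argument along $t_k\to t$ shows $v\in C([0,T];H_{\mathrm{w}})$. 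To obtain uniform-in-time convergence I would test against an arbitrary $h\in H$. Because $U\subset H^1\subset H$ with dense embeddings, $U$ is dense in $H$, so for any $\delta>0$ I may choose $\phi\in U$ with $\|h-\phi\|_H<\delta$ and split
\[
(v_n(t)-v(t),h)_H=\langle v_n(t)-v(t),\phi\rangle_{U^\prime,U}+(v_n(t)-v(t),h-\phi)_H .
\]
The first term is bounded by $\|\phi\|_U\,\|v_n-v\|_{C([0,T];U^\prime)}$, which tends to $0$ uniformly in $t$, while the second is bounded by $2M\delta$ uniformly in $t$ and $n$ by the $H$-bound. Letting $n\to\infty$ and then $\delta\to0$ gives $\sup_{t\in[0,T]}|(v_n(t)-v(t),h)_H|\to0$, i.e.\ convergence in $C([0,T];H_{\mathrm{w}})$, which together with the $\tilde{\mathcal T}$-convergence yields convergence in $\mathcal T$.

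I expect the main obstacle to be exactly this last upgrade: converting uniform-in-time convergence in the large space $U^\prime$ into uniform-in-time weak convergence in $H$. The mechanism is that the uniform bound (iv) confines all functions to a fixed ball of $H$, on which the weak-$H$ topology is metrizable and agrees with the $U^\prime$-topology, so that the density of $U$ in $H$ lets the splitting above go through with estimates that are uniform in $t$.
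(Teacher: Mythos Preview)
Your proposal is correct and follows precisely the route the paper has in mind: the paper does not spell out a proof but refers to Lemma~3.3 in \cite{BM2013} and Lemma~2.7 in \cite{MR2005}, whose content is exactly the argument you give---reduce to the previous lemma via (i)--(iii), then use the uniform $L^\infty(0,T;H)$ bound (iv) together with the density of $U$ in $H$ to upgrade $C([0,T];U')$ convergence to $C([0,T];H_{\mathrm w})$ convergence. Your splitting $(v_n(t)-v(t),h)_H=\langle v_n(t)-v(t),\phi\rangle_{U',U}+(v_n(t)-v(t),h-\phi)_H$ and the observation that on the ball $\{\|\cdot\|_H\le M\}$ the weak-$H$ and $U'$ topologies agree are the standard mechanism, and your justification that $v\in C([0,T];H_{\mathrm w})$ is the right one.
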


From this lemma we also get a tightness criterion.
We recall that a family of probability measures $\{P_n\}_n$, defined
on the $\sigma$-algebra of Borel subsets of $Z$, is tight
if for any $\varepsilon>0$ there exists a
compact subset $K_\varepsilon$ of $Z$ such that
\[
\inf_n P_n(K_\varepsilon)\ge 1-\varepsilon
\]
or equivalently
\[
\sup_n P_n(Z\setminus K_\varepsilon)\le \varepsilon.
\]

\begin{lemma}[tightness criterion] \label{lemma-tight}
We are given parameters $\gamma >0$, $\delta>0$, $1<p<\infty$ and
a sequence $\{v_n\}_{n \in \mathbb N}$ of adapted processes in $C([0,T];U^\prime)$.

Assume that for any $\varepsilon>0$ there exist positive constants
$R_i=R_i(\varepsilon)$ ($i = 1, \ldots, 4$)
such that
\begin{align}
&\displaystyle\sup_n \mathbb{P}(\|v_n\|_{L^{p}(0,T;L^4)}>R_1) \le \varepsilon\\
&\displaystyle\sup_n \mathbb{P}(\|v_n\|_{C^\gamma([0,T];H^{-1})}>R_2) \le \varepsilon\\
&\displaystyle\sup_n \mathbb{P}(\|v_n\|_{L^2(0,T;H^\delta)}>R_3) \le \varepsilon\\
&\displaystyle\sup_n \mathbb{P}(\|v_n\|_{L^\infty(0,T;H)}>R_4) \le \varepsilon
\end{align}
Let $\mu_n$ be the law of $v_n$ on
$Z=L^{p}_{\mathrm{w}}(0,T;L^4)\cap C([0,T];U^\prime) \cap L^2(0,T;H_{\mathrm{loc}})\cap C([0,T];H_{\mathrm{w}})$.
Then, the sequence $\{\mu_n\}_{n\in \mathbb N}$ is tight in $Z$.
\end{lemma}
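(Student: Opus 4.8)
The plan is to produce, for each $\varepsilon>0$, a single $\mathcal T$-compact subset of $Z$ whose $\mu_n$-measure is at least $1-\varepsilon$ for every $n$; this set will be the closure of a joint sublevel set of the four norms appearing in the hypotheses, and its $\mathcal T$-compactness will be delivered directly by the compactness lemma stated immediately above.

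First I would fix $\varepsilon>0$ and apply the four assumed probability bounds with $\varepsilon/4$ in place of $\varepsilon$, obtaining constants $R_1,R_2,R_3,R_4$ (depending on $\varepsilon,\gamma,\delta,p,T$) with
\[
\sup_n \mathbb{P}(\|v_n\|_{L^{p}(0,T;L^4)}>R_1)\le\tfrac\varepsilon4,\qquad
\sup_n \mathbb{P}(\|v_n\|_{C^\gamma([0,T];H^{-1})}>R_2)\le\tfrac\varepsilon4,
\]
\[
\sup_n \mathbb{P}(\|v_n\|_{L^2(0,T;H^\delta)}>R_3)\le\tfrac\varepsilon4,\qquad
\sup_n \mathbb{P}(\|v_n\|_{L^\infty(0,T;H)}>R_4)\le\tfrac\varepsilon4.
\]
Then I would set
\[
K_\varepsilon=\bigl\{v\in Z:\ \|v\|_{L^{p}(0,T;L^4)}\le R_1,\ \|v\|_{C^\gamma([0,T];H^{-1})}\le R_2,\ \|v\|_{L^2(0,T;H^\delta)}\le R_3,\ \|v\|_{L^\infty(0,T;H)}\le R_4\bigr\}
\]
and take $\overline{K}_\varepsilon$, its closure in $(Z,\mathcal T)$. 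The set $K_\varepsilon$ satisfies exactly conditions (i)--(iv) of the compactness lemma above, with the same $p,\gamma,\delta$, hence it is $\mathcal T$-relatively compact and $\overline{K}_\varepsilon$ is $\mathcal T$-compact. Finally, since $Z\setminus K_\varepsilon$ is contained in the union of the four ``bad'' events $\{\|v\|_{L^{p}(0,T;L^4)}>R_1\}$, $\{\|v\|_{C^\gamma([0,T];H^{-1})}>R_2\}$, $\{\|v\|_{L^2(0,T;H^\delta)}>R_3\}$, $\{\|v\|_{L^\infty(0,T;H)}>R_4\}$, the union bound gives $\mu_n(Z\setminus\overline{K}_\varepsilon)\le\mu_n(Z\setminus K_\varepsilon)\le 4\cdot\frac\varepsilon4=\varepsilon$ for every $n$, i.e. $\inf_n\mu_n(\overline{K}_\varepsilon)\ge 1-\varepsilon$; as $\varepsilon$ was arbitrary, tightness of $\{\mu_n\}_{n\in\mathbb N}$ in $Z$ follows.

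This bookkeeping is routine; the one place I would be careful, $(Z,\mathcal T)$ being non-metrizable, is to justify that $K_\varepsilon$ is a Borel subset of $Z$, so that the union bound is legitimate. This holds because each of the four norms is a $\mathcal T$-lower semicontinuous functional on $Z$: the $L^p(0,T;L^4)$-norm is weakly lower semicontinuous, and for the $H^{-1}$-, $H^\delta$- and $H$-valued norms one uses that a sequence which is bounded in the relevant Hilbert space and convergent in the coarser topology contained in $\mathcal T$ converges weakly there, whence the norm is lower semicontinuous; consequently the sublevel sets defining $K_\varepsilon$ are closed, in particular Borel. (The hypothesis that $\mu_n$ is the law of $v_n$ on $Z$ already records that each $v_n$ is a bona fide $Z$-valued random variable, which is consistent with $v_n\in C([0,T];U^\prime)$ and the pathwise regularity forced, on a full-probability event, by finiteness of the four norms together with continuity of the embeddings into the spaces building $\mathcal T$.) I expect this measurability point, rather than any analytic estimate, to be the only genuine obstacle.
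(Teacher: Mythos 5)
Your proof is correct and follows exactly the route the paper intends: the lemma is stated as an immediate consequence of the preceding compactness criterion, obtained by taking the closure of the joint sublevel set of the four norms and applying a union bound with $\varepsilon/4$. Your extra remark on Borel measurability of the sublevel sets via lower semicontinuity of the norms is a sensible precaution that the paper leaves implicit, but it does not change the argument.
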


\medskip
\noindent
{\bf Acknowledgements}.
Part of this research started while Z. Brze{\'z}niak   was visiting the
Department of Mathematics of the University of Pavia and was partially
supported by  the GNAMPA-INDAM project "Regolarit\`a e dissipazione in
fluidodinamica" and the PRIN 2010-2011;
he would like to thank the hospitality of the Department.

\end{document}